\renewcommand{\email}[2][]{%
  \ifx\emails\@empty\relax\else{\g@addto@macro\emails{,\space}}\fi%
  \@ifnotempty{#1}{\g@addto@macro\emails{\textrm{(#1)}\space}}%
  \g@addto@macro\emails{#2}%
}
\DeclareMathOperator*{\im}{Im}
\DeclareMathOperator*{\Area}{area}
\DeclareMathOperator*{\diam}{diam}
\DeclareMathOperator*{\ch}{ch}
\DeclareMathOperator*{\length}{length}
\DeclareMathOperator*{\Vol}{Vol}
\DeclareMathOperator*{\sys}{sys}
\DeclareMathOperator*{\dist}{distor}
\DeclareMathOperator*{\interior}{int}
\DeclareMathOperator*{\genus}{genus}
\DeclareMathOperator*{\cell}{cell}
\DeclareMathOperator*{\sign}{sign}
\DeclareMathOperator*{\exterior}{ext}
\renewcommand{\epsilon}{\varepsilon}
\renewcommand{\Lambda}{\Uplambda}
\newtheorem{theorem}{Theorem}[section]
\newtheorem{lemma}[theorem]{Lemma}
\newtheorem{question}[theorem]{Question}
\theoremstyle{definition}
\newtheorem{definition}[theorem]{Definition}
\newtheorem{remark}[theorem]{Remark}
\newtheorem{example}[theorem]{Example}
\begin{document}

\title[Twisted Embeddings of Tori]{Twisted embeddings of tori have small extrinsic systole}
\date{\today}
\author[Vasudevan]{Sahana Vasudevan}
\address[Sahana Vasudevan]{School of Mathematics, Institute for Advanced Study, Princeton, NJ 08540, USA; Department of Mathematics, Princeton University, Princeton, NJ 08540, USA}
\email{sahanav@ias.edu}

\fancyhead[CE]{Vasudevan}

\begin{abstract} We prove a type of systolic inequality for embeddings of $T^2$ in $\mathbb{R}^3$. In particular, a highly twisted $T^2$ embedded in $\mathbb{R}^3$ must contain a non-contractible loop of small $\mathbb{R}^3$-diameter. 
\end{abstract} 

\maketitle

\section{Introduction}

The systole of a Riemannian manifold is the length of its shortest non-contractible loop. Systolic inequalities compare the systole to other geometric invariants, like volume. They have been studied since the 1940s, when Loewner proved a systolic inequality for two-dimensional tori. In this paper, we introduce the extrinsic systole, a generalization of the systole that applies to an embedded submanifold of a Riemannian manifold. We prove an extrinsic systolic inequality for two-dimensional tori embedded in $\mathbb{R}^3$. This inequality is in part motivated by questions about the distortion of knots, a geometric knot invariant introduced by Gromov in the 1980s.

We start with Loewner's systolic inequality. Let $T$ be a two-dimensional torus equipped with a Riemannian metric. 

\begin{theorem}[Loewner, 1949] \label{Loewner's ineq} There exists a non-contractible closed curve $\gamma$ on $T$ such that $$\length(\gamma)\leq 2^{1/2}3^{-1/4}\Area(T)^{1/2}.$$
\end{theorem}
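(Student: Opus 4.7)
The plan is to reduce to the flat case via the uniformization theorem and then average the length of a natural family of noncontractible curves.

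First I would invoke the uniformization theorem for genus-one surfaces: any Riemannian metric $g$ on $T$ is conformally equivalent to a flat metric $g_0$ on $T \cong \mathbb{R}^2/\Lambda$ for some lattice $\Lambda \subset \mathbb{R}^2$, so $g = f^2 g_0$ for a positive smooth function $f$ on $T$. After rotating and rescaling $\Lambda$ (operations which do not change the ratio $\length/\Area^{1/2}$), I may assume a shortest nonzero vector of $\Lambda$ is $(\ell, 0)$, so $\sys(g_0) = \ell$, and that a fundamental parallelogram for $\Lambda$ has base $\ell$ and height $h$, so $\Area(g_0) = \ell h$.

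Next I would consider the family of horizontal closed geodesics $\gamma_y(t) = (t, y)$, $t \in [0, \ell]$, in the flat metric $g_0$; each is noncontractible, has $g_0$-length $\ell$, and they foliate $T$ as $y$ varies over $[0, h]$. In the metric $g$, the length of $\gamma_y$ is $L(y) = \int_0^\ell f(t, y)\,dt$, and the Cauchy-Schwarz inequality gives $L(y)^2 \le \ell \int_0^\ell f(t,y)^2\,dt$. Integrating over $y \in [0,h]$ and applying Fubini,
\[
\int_0^h L(y)^2\,dy \;\le\; \ell \int_0^h\!\!\int_0^\ell f^2\,dt\,dy \;=\; \ell \cdot \Area(g).
\]
Therefore some $y_0$ satisfies $L(y_0)^2 \le (\ell/h)\,\Area(g)$, and the curve $\gamma_{y_0}$ is noncontractible.

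Finally, I would apply the elementary lattice fact that any two-dimensional lattice with shortest nonzero vector of length $\ell$ admits a fundamental parallelogram of height at least $\tfrac{\sqrt{3}}{2}\ell$, with equality exactly for the hexagonal lattice; this is the usual reduction of $\tau$ into the modular fundamental domain combined with $|\tau| \ge 1$. Substituting $h \ge \tfrac{\sqrt{3}}{2}\ell$ into the bound above yields $\length(\gamma_{y_0}) \le (2/\sqrt{3})^{1/2}\Area(g)^{1/2} = 2^{1/2}3^{-1/4}\Area(g)^{1/2}$.

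The main obstacle is choosing the right averaging family: using horizontal geodesics of the flat representative $g_0$ (rather than geodesics of $g$ itself) is what makes Cauchy-Schwarz interact cleanly with the geometric quantities, because $g$-length is linear in the conformal factor $f$ while $g$-area is quadratic. Once this is set up, the uniformization theorem and the hexagonal lattice bound (the Hermite constant in dimension two) enter as black boxes, and the sharp constant falls out.
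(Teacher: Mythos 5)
Your proof is correct and complete; it is the standard conformal-averaging argument for Loewner's theorem. The paper itself does not prove this statement—it is quoted in the introduction as a classical result and used only as background motivation—so there is no internal proof to compare against. For the record, your argument is sound at every step: the rectangle $[0,\ell]\times[0,h]$ is indeed a fundamental domain once the shortest vector is normalized to $(\ell,0)$ (any nonzero lattice vector with $|y|<h$ must have $y=0$ and hence $|x|\ge\ell$), Cauchy--Schwarz and Fubini give $\int_0^h L(y)^2\,dy\le\ell\,\Area(g)$, and the reduction bound $h\ge\tfrac{\sqrt3}{2}\ell$ follows from choosing the second basis vector $(a,h)$ with $|a|\le\ell/2$ and $a^2+h^2\ge\ell^2$, yielding exactly the constant $2^{1/2}3^{-1/4}$.
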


\cref{Loewner's ineq} gives an upper bound on the systole of $T$, henceforth denoted $\sys(T)$. The systole may alternatively be described as the diameter of the smallest metric ball on $T$ containing a non-contractible closed curve.

In this paper, we consider two-dimensional tori $T$ embedded in $\mathbb{R}^3$. Motivated by Loewner's inequality, we investigate global geometric properties of the embedding. Our main result is an upper bound on the diameter of the smallest ball in $\mathbb{R}^3$ that contains a non-contractible curve on $T$. The upper bound depends only on the metric on $T$ and the topology of the embedding, and in some cases, is much smaller than $\sys(T)$. 

In order to state our main result, let us describe the topology of the embedding in more detail. The complement of the torus, $\mathbb{R}^3-T$, has two connected components. Let $u\in H_1(T,\mathbb{Z})$ be a nonzero primitive homology class that is trivial in one of the connected components of $\mathbb{R}^3-T$. Let $v\in H_1(T,\mathbb{Z})$ be a nonzero primitive homology class that is trivial in the other connected component of $\mathbb{R}^3-T$.

\begin{definition} The homological systole with respect to $u$ is $$\textstyle\sys_u(T)=\displaystyle\inf_{[\gamma]=u}\length(\gamma),$$ where $\gamma$ ranges over closed curves on $T$ representing $u$.
\end{definition}

Our main result is the following inequality:

\begin{theorem}\label{main embedding theorem} There exists a closed curve $\gamma$ on $T$ representing $v$ such that $$\textstyle\diam_{\mathbb{R}^3} \gamma\lesssim \sys_{u}(T)^{-1/3}\Area(T)^{2/3}+\sys_{u}(T)^{-1}\Area(T).$$ 
\end{theorem}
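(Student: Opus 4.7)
The plan is to decompose the bound into two parts: an intrinsic estimate giving the second term $\sys_u(T)^{-1}\Area(T)$, and a more delicate extrinsic argument giving the first term $\sys_u(T)^{-1/3}\Area(T)^{2/3}$. Throughout, set $\ell=\sys_u(T)$ and $A=\Area(T)$, and fix a shortest curve $\alpha$ on $T$ representing $u$, so $\length(\alpha)=\ell$.

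For the second term, I would cut $T$ along $\alpha$ to obtain a cylinder $C$ of area $A$ with boundary components $\partial_1,\partial_2$ each isometric to $\alpha$. Applying coarea to the distance-from-$\partial_1$ function $\phi:C\to[0,d]$ (where $d=\mathrm{dist}_C(\partial_1,\partial_2)$) yields $\int_0^d\length(\phi^{-1}(t))\,dt\leq A$; since every level set represents $u$ on $T$ and so has length $\geq\ell$, this forces $d\lesssim A/\ell$. A minimizing geodesic from $\partial_1$ to $\partial_2$ descends to a closed curve on $T$ in the class $v+ku$ for some integer $k$ with $|k|\lesssim A/\ell^2$; correcting by $k$ copies of $\alpha$ produces a curve in class $v$ of length $\lesssim A/\ell$, hence $\mathbb{R}^3$-diameter $\lesssim A/\ell$.

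The first term is the main new content and uses the embedding crucially. The strategy is to localize a $v$-representative in a small $\mathbb{R}^3$-ball around $\alpha$. Define $f:T\to[0,\infty)$ by $f(x)=\mathrm{dist}_{\mathbb{R}^3}(x,\alpha)$ (the \emph{extrinsic} distance from $\alpha$) and consider the sublevel subsurfaces $T_r=f^{-1}([0,r])$. Let $r_0$ be the infimum of $r$ such that the image of $H_1(T_r)\to H_1(T)$ is not contained in $\langle u\rangle$. Any closed curve in $T_{r_0}$ whose class has nontrivial $v$-component lies in the $\mathbb{R}^3$-neighborhood $N_{r_0}(\alpha)$, which has diameter $\leq 2r_0+\ell$; after adjusting by copies of $\alpha$ (each of $\mathbb{R}^3$-diameter $\leq\ell/2$), we obtain a curve in class $v$ of $\mathbb{R}^3$-diameter $\lesssim r_0+\ell$. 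It therefore suffices to show $r_0\lesssim\ell^{-1/3}A^{2/3}$.

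The bound on $r_0$ is the principal obstacle. By contradiction, assume $r_0$ much larger. For every $r<r_0$, each component of $\partial T_r=f^{-1}(r)$ represents a multiple of $u$ on $T$, so each non-null component has intrinsic length $\geq\ell$. The coarea formula (with $f$ being $1$-Lipschitz) gives
$$\int_0^{r_0}\length(\partial T_r)\,dr\leq\Area(T_{r_0})\leq A,$$
which on its own only yields $r_0\leq A/\ell$, the weaker intrinsic bound. The improvement to $\ell^{-1/3}A^{2/3}$ should come from an extrinsic lower bound on $\length(\partial T_r)$: once $r$ exceeds the ``tube thickness'' of $T$ around $\alpha$, cross-sectional disks of the tubular neighborhood $N_r(\alpha)\subset\mathbb{R}^3$ must meet $T$ transversally in many sheets, forcing $\partial T_r$ to have many $u$-components. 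Quantifying this via the tube volume $\sim r^2\ell$ together with the area bound $\Area(T_r)\leq A$ and the systolic constraint, I would expect to derive a polynomial lower bound of the form $\length(\partial T_r)\gtrsim r^2\ell/A$, which when inserted into the coarea inequality produces exactly the $2/3$ exponent. Making this integral-geometric accounting precise—in particular, relating the multiplicity of $T\cap N_r(\alpha)$ to the topology of $T_r$—is the hard part of the proof.
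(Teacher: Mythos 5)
Your decomposition of the bound into two regimes (intrinsic for small $\sys_u$, extrinsic for large $\sys_u$) is a sensible organizing idea, and it is indeed true that the hard content is the first term. However, the approach you propose for the first term has a structural problem that I do not think can be repaired without changing the strategy substantially, and the paper's method is completely different (it works with a cubical lattice at the target scale and proves an isoperimetric inequality on its $2$-skeleton, rather than localizing near a fixed curve).

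The central gap is that localizing near a shortest $u$-representative $\alpha$ cannot by itself give the bound, because $\alpha$ may have large extrinsic diameter. A closed curve in $T_{r_0}$ lies in $N_{r_0}(\alpha)$, whose $\mathbb{R}^3$-diameter is at least $\diam_{\mathbb{R}^3}(\alpha)$, and $\diam_{\mathbb{R}^3}(\alpha)$ is not controlled by the hypotheses: a priori it can be as large as $\length(\alpha)/2 = \sys_u(T)/2$, which for $\sys_u(T) \gg \Area(T)^{1/2}$ is far larger than $\sys_u(T)^{-1/3}\Area(T)^{2/3}$. (One sees the same thing in the twisted-cylinder example: there $\diam_{\mathbb{R}^3}(\alpha)$ is order $1$ while the theorem promises a $v$-curve of diameter $\lesssim n^{-1/3}$.) Your bound ``$\lesssim r_0 + \ell$'' therefore cannot close the argument even if $r_0$ were as small as you wish. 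The theorem guarantees a small $v$-curve \emph{somewhere} on $T$, with no reason it should be near any particular $u$-minimizer; this is precisely what the paper's lattice set-up circumvents by not privileging a basepoint or a special curve.

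There are two secondary issues as well. First, the asserted lower bound $\length(\partial T_r) \gtrsim r^2\ell/A$ is not derived; the ``tube volume $\sim r^2\ell$'' estimate is only an \emph{upper} bound for $\Vol(N_r(\alpha))$ (and can be far smaller if $\alpha$ coils), and the passage from tube volume to a count of sheets of $T_r$ and then to $\length(\partial T_r)$ is exactly the sort of multiplicity accounting the paper handles via cellular chains, norms, and \cref{boundary of k embedded chain}/\cref{isoperimetric inequality on X^2} --- so you have named the hard part but not addressed it. Second, in the intrinsic half you claim $|k|\lesssim A/\ell^2$ for the winding of a minimizing cross-cut; this is not justified and I do not believe it in general, but it is also unnecessary: appending copies of $\alpha$ costs nothing extra in $\mathbb{R}^3$-diameter beyond $\diam_{\mathbb{R}^3}(\alpha)\le\ell/2$, so the intrinsic construction gives $\diam_{\mathbb{R}^3}\gamma\lesssim A/\ell+\ell$, which suffices for the claimed bound only when $\ell\lesssim A^{1/2}$. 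Stating it this way makes clear that the interesting range $\ell\gtrsim A^{1/2}$ is entirely unaddressed by the intrinsic argument.
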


\begin{remark} Here, the symbol $\lesssim$ means that the left-hand side is less than the right-hand side, up to multiplication by a universal constant. Henceforth, the underlying constant in the symbol $\lesssim$ will always be universal.
\end{remark}

\begin{remark} Note that the inequality is invariant under scaling of $T$. Furthermore, $T$ need not be unknottedly embedded in $\mathbb{R}^3$. In any case, there are always two homology classes on $T$ that are trivial in the two components of the complement. Note also that the hypotheses regarding $u$ and $v$ are symmetric.
\end{remark}

\cref{main embedding theorem} is most interesting when $\sys_u(T)$ is large compared to $\Area(T)$. In this case, the conclusion of the theorem is that a small ball in $\mathbb{R}^3$ contains a non-contractible curve on $T$, even if $\sys(T)$ is not necessarily small. This motivates the following definition:

\begin{definition} The extrinsic systole of $T$, denoted $\sys_{\mathbb{R}^3}(T)$, is the diameter of the smallest ball in $\mathbb{R}^3$ that contains a non-contractible curve on $T$.
\end{definition}

\begin{remark} The definition of extrinsic systole extends more generally to submanifolds $M$ of a Riemannian manifold $N$. Assuming $\pi_1(M)$ is nontrivial, we define $\sys_N(M)$ to be the smallest metric ball on $N$ containing a non-contractible closed curve on $M$. When $M=N$, this definition recovers the (intrinsic) systole. 
\end{remark}

Then \cref{main embedding theorem} is an extrinsic systolic inequality: an upper bound on $\sys_{\mathbb{R}^3}(T)$ in terms of intrinsic geometric quantities on $T$. The upper bound can be much less than $\sys(T)$, as seen in the following special case of the theorem.

\begin{example} \label{twisted cylinder}
Let $T=\mathbb{R}^2/\mathbb{Z}^2$ be the flat torus equipped with the Euclidean metric. Take any unknotted and $C^1$-isometric embedding of $T$ in $\mathbb{R}^3$, with the topology of the embedding specified as follows. Let $u$ and $v$ be homology classes in $H_1(T)$ trivial in the homology of the two components of $\mathbb{R}^3-T$, respectively. Note that closed curves on $T$ are given by points on $\mathbb{Z}^2$. Embed $T$ in $\mathbb{R}^3$ so that under this identification, 
\begin{equation*}
\begin{cases} (1,0)=u+nv\\
(0,1)=v,
\end{cases}
\end{equation*} for a positive integer $n$. Geometrically, such an embedding may be constructed by taking a standard cylinder of height $1$ and circumference $1$, and twisting one end $n$ times before gluing it to the other end, as shown in Figure 1. By \cref{main embedding theorem}, any embedding with the prescribed topology satisfies $\sys_{\mathbb{R}^3}(T)\lesssim n^{-1/3}$. On the other hand, $\sys(T)=1$.

\begin{figure}[h]
\begingroup%
  \makeatletter%
  \providecommand\color[2][]{%
    \errmessage{(Inkscape) Color is used for the text in Inkscape, but the package 'color.sty' is not loaded}%
    \renewcommand\color[2][]{}%
  }%
  \providecommand\transparent[1]{%
    \errmessage{(Inkscape) Transparency is used (non-zero) for the text in Inkscape, but the package 'transparent.sty' is not loaded}%
    \renewcommand\transparent[1]{}%
  }%
  \providecommand\rotatebox[2]{#2}%
  \newcommand*\fsize{\dimexpr\f@size pt\relax}%
  \newcommand*\lineheight[1]{\fontsize{\fsize}{#1\fsize}\selectfont}%
  \ifx\svgwidth\undefined%
    \setlength{\unitlength}{124.10734657bp}%
    \ifx\svgscale\undefined%
      \relax%
    \else%
      \setlength{\unitlength}{\unitlength * \real{\svgscale}}%
    \fi%
  \else%
    \setlength{\unitlength}{\svgwidth}%
  \fi%
  \global\let\svgwidth\undefined%
  \global\let\svgscale\undefined%
  \makeatother%
  \begin{picture}(1,0.69485073)%
    \lineheight{1}%
    \setlength\tabcolsep{0pt}%
    \put(0,0){\includegraphics[width=\unitlength,page=1]{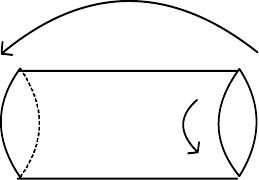}}%
    \put(0.56874031,0.18052652){\makebox(0,0)[lt]{\lineheight{1.25}\smash{\begin{tabular}[t]{l}$\scriptstyle{\times}\displaystyle{n}$\end{tabular}}}}%
  \end{picture}%
\endgroup%

\caption{}
\end{figure}
\end{example}

\begin{remark} \cref{twisted cylinder} may be empirically tested by twisting a piece of paper, as follows. Take a $1/2$-by-$1$ strip of paper; twist one end $n$ full times and glue it to the other end. One may observe that for large $n$, the twisting process is impossible to execute unless one ``pinches'' the paper. Viewing the unglued paper as a degenerate cylinder, and the glued paper as a degenerate torus, ``pinching'' the paper is equivalent to creating small extrinsic systole. 
\end{remark}

The inequality in \cref{main embedding theorem} is sharp (up to a constant factor) as $\sys_u(T)\to \infty$. We include a sharp example for it in \cref{Appendix A1}. On the other hand, it is not clear if the bound $n^{-1/3}$ in \cref{twisted cylinder} is sharp as $n\to \infty$, over all $n$-twisted embeddings of the flat torus. The flat torus $T=\mathbb{R}^2/\mathbb{Z}^2$ admits an $n$-twisted $1$-Lipschitz map to a standard torus in $\mathbb{R}^3$ scaled by $\sim n^{-1}$. By the Nash isometric embedding theorem, we may perturb this map in an arbitrarily small neighborhood of the standard torus so that it is a $C^1$-isometry. The resulting embedding of $T$ in $\mathbb{R}^3$ has extrinsic systole $\sim n^{-1}$. It is open if there is another embedding of $\mathbb{R}^2/\mathbb{Z}^2$ with the prescribed topology whose extrinsic systole is larger. (In \cref{Appendix A3}, we describe a generalization of this question.)

\subsection{Intrinsic systolic geometry and ideas in the proof of \cref{main embedding theorem}} 
After Loewner's inequality, many other systolic inequalities and generalizations have been proved both in the two-dimensional (\cite{Pu52}, \cite{Bla61}, \cite{Bav86}, \cite{Gro96},  \cite{BS10}, \cite{BPS12}) and higher-dimensional (\cite{Gro83}, \cite{Gro99}, \cite{BK03}, \cite{BK04}, \cite{IK04}) settings. (See \cite{Kat07} for a more complete list of references.) In particular, variants of the systole (homological, stable, as well as higher-dimensional) have been studied. However, all of these geometric invariants are intrinsic invariants. Extrinsic systolic invariants have not been introduced before, and \cref{main embedding theorem} appears to be the first extrinsic systolic inequality.  

The deepest result in intrinsic systolic geometry is Gromov's systolic inequality in \cite{Gro83}, which generalizes Loewner's inequality to higher dimensional essential manifolds (including higher dimensional tori). Gromov's systolic inequality is connected to various areas of geometry and topology: isoperimetric inequalities, scalar curvature, topological dimension theory, and hyperbolic geometry; these connections give different proofs of the inequality (see \cite{Gut10}). In particular, Gromov's original proof exploits a connection to an analogue of the Euclidean isoperimetric inequality on a Banach space. Our proof of \cref{main embedding theorem} also exploits a connection to an isoperimetric inequality, but this isoperimetric inequality is on a certain type of $2$-dimensional CW-complex instead.

We now give a high-level overview of the proof of \cref{main embedding theorem} (see \cref{technical overview of proof of main theorem} for a more rigorous but more technical overview of most of the proof). Recall that we have a torus $T$ embedded in $\mathbb{R}^3$, and we wish to prove an upper bound on its extrinsic systole. The first step is to translate the theorem into a discretized variant that involves the intersection of $T$ with the skeleta of a certain CW-complex, namely, a cubical lattice in $\mathbb{R}^3$. Roughly speaking, the variant states that if $T$ intersects the $1$ and $2$-skeleta of the cubical lattice in a bounded way, and no $3$-cell contains a non-contractible curve of $T$, then $\sys_u(T)$ also has an upper bound. The variant recovers the original theorem by integrating over a family of such CW-complexes. (In \cref{reduction to CW intersection}, we state the variant precisely and deduce \cref{main embedding theorem} as a consequence.) What we gain from this translation is the ability to write bounds on the extrinsic systole as a purely topological statement. For example, if a cube ($3$-cell) of the CW-complex contains a non-contractible curve on $T$, then the diameter of a cube is an upper bound for the extrinsic systole of $T$. 

The variant is a consequence of an isoperimetric inequality on the $2$-skeleton of the cubical lattice. Intuitively, this isoperimetric inequality states that any $1$-chain on the $2$-skeleton with bounded norm may be filled by a $2$-chain on the $2$-skeleton with bounded norm; we use algebraic topology to formalize this statement and define what norm means in this context. In \cref{isoperimetric section}, we state and prove such an isoperimetric inequality. Finally, in \cref{proof of CW intersection}, we use this to prove the variant.

\subsection{Motivation from knot distortion} 

In \cite{Gro83}, Gromov introduced the notion of distortion $\dist(K)$ associated to a knot $K$. Distortion is an invariant of $K$ that measures the infimum of the bi-Lipschitz constant of embeddings of $K$ in $\mathbb{R}^3$, over all such embeddings. Although distortion has an elementary definition, the question of estimating $\dist(K)$ for a knot $K$ has been very difficult, even in simple cases.

In \cite{Par11}, Pardon proved that torus knots $T_{p,q}$ satisfy that $\dist(T_{p,q})$ is at least $\min\{p,q\}$ up to a constant factor. At a high-level, Pardon's argument also involves reducing the distortion inequality to a discretized variant, involving the intersection of a torus containing  $T_{p,q}$ with the skeleta of a certain CW-complex in $\mathbb{R}^3$. The variant is then a consequence of a purely topological statement concerning the intersection of tori with CW-complexes.

A modification of Pardon's argument gives another extrinsic systolic inequality for two-dimensional tori in $\mathbb{R}^3$ (see \cref{Appendix A2} for a precise statement and proof). The inequality reduces to a discretized variant that is a modification of the one above, and a consequence of the same topological statement as above. Roughly speaking, the inequality says that if $\sys_u(T)$ and $\sys_v(T)$ are both large compared to $\Area(T)$, then $\sys_{\mathbb{R}^3}(T)$ is small. It is still open whether $\dist(T_{2,q})\to \infty$ as $q\to \infty$; if we expect this statement to be true, it is natural to ask whether just $\sys_u(T)$ being large compared to $\Area(T)$ implies that $\sys_{\mathbb{R}^3}(T)$ is small. This was in part our original motivation to prove \cref{main embedding theorem}. 

In \cite{Vas25}, we prove new lower bounds for the distortion of certain knots, and upper bounds for the extrinsic systole of Seifert surfaces associated to these knots. Again, the technique involves reducing these inequalities to discretized variants involving the intersection of the Seifert surface with a CW-complex in $\mathbb{R}^3$. In this context too, both the distortion inequality and extrinsic systolic inequality are consequences of the same underlying topological statement about how surfaces intersect CW-complexes.

\subsection*{Acknowledgments} I thank Sasha Berdnikov, Larry Guth and Shmuel Weinberger for many conversations related to this paper. I have been supported by NSF DMS-2202831 and the Friends of the Institute for Advanced Study.

\section{A variant of \cref{main embedding theorem}}\label{reduction to CW intersection}

In this section, we state a variant of \cref{main embedding theorem}, and prove that the theorem follows as a consequence of the variant. 

Let $X$ be the infinite CW $3$-complex given by a cubical lattice in $\mathbb{R}^3$. Let $X^1$ and $X^2$ be the $1$-skeleton and $2$-skeleton of $X$, respectively. All homology will be with coefficients in $\mathbb{Z}$.

\begin{theorem}\label{main embedding theorem CW version} Let $T\subset X\simeq \mathbb{R}^3$ be an embedded torus, in general position with the CW structure of $X$. Let $u$ and $v$ be primitive homology classes in $H_1(T)$ that are trivial in the two components of $\mathbb{R}^3-T$, respectively. Suppose $$\textstyle\sys_u(T)\geq 20\cdot(|X^1\cap T|+1)\cdot\length(X^2\cap T).$$ Then there exists a $3$-cell of $X$ containing a representative of $v$.
\end{theorem}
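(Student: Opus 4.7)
The plan is to prove the contrapositive: assume no $3$-cell of $X$ contains a representative of $v$, and produce a closed curve on $T$ representing $u$ of length at most $20(|X^1 \cap T|+1)\length(X^2 \cap T)$. Let $G = X^2 \cap T$ with vertex set $V = X^1 \cap T$. By the general position hypothesis, $G$ is a $4$-valent graph on $T$, whose complementary pieces are the surfaces $\Sigma_i = C_i \cap T$ indexed by the $3$-cells $C_i$ of $X$ that meet $T$. Write $L_i = \operatorname{Im}(H_1(\Sigma_i) \to H_1(T))$, so the hypothesis is $v \notin L_i$ for every $i$.

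The first step is to produce a cycle $\alpha \subset G$ with $[\alpha] = au + bv$ and $a \neq 0$. I argue by contradiction: suppose $\operatorname{Im}(H_1(G) \to H_1(T)) \subseteq \langle v \rangle$. The long exact sequence of the pair $(T, G)$, combined with excision and Lefschetz duality on each oriented piece, gives an identification $H_1(T, G) \cong \bigoplus_i H^1(\Sigma_i)$ under which the natural map from $H_1(T)$ sends a class $w$ to the family $\delta \mapsto w \cdot i_*(\delta)$ on each $\Sigma_i$. Because $u \notin \langle v \rangle \supseteq \operatorname{Im}(H_1(G))$, the class $u$ lies outside the kernel of $H_1(T) \to H_1(T, G)$, so its image is nonzero in some $H^1(\Sigma_i)$. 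This provides a loop $\delta \subset \Sigma_i$ with $u \cdot i_*(\delta) \neq 0$; since intersection numbers give $u \cdot (pu+qv) = q$, the class $i_*(\delta) \in L_i$ has nonzero $v$-coefficient. On the other hand, each boundary circle of each $\Sigma_i$ is a cycle in $G$, hence under our contradiction hypothesis has class in $\langle v \rangle$. Being an embedded closed curve in $T$, such a class is zero or primitive, so belongs to $\{0, \pm v\}$; the nonzero case would put $v \in L_i$, contradicting the standing hypothesis, so every boundary is null-homologous. An Euler characteristic computation in $T$ shows that every connected component of every $\Sigma_i$ is planar (a positive-genus component would force $L_i = H_1(T) \ni v$). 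For planar surfaces, $H_1$ is generated by boundary classes, so $L_i = 0$ --- contradicting the element with nonzero $v$-coefficient just found.

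The cycle $\alpha$ has length at most $\length(X^2 \cap T)$ but represents $au + bv$ rather than $u$ itself. For the second step I would invoke the isoperimetric inequality on $X^2$ proved in the referenced section: since $H_1(X^2) = 0$, $\alpha$ bounds a $2$-chain $D$ in $X^2$ whose size, and whose intersection length with $T$, are controlled by $\length(\alpha)$. Using $D \cap T \subset G$ together with the boundary cycles of the pieces $\Sigma_i$, I would assemble from $\alpha$ and these auxiliary cycles a closed curve on $T$ --- possibly combining arcs in $G$ with arcs across certain pieces --- of homology class exactly $u$ and total length at most $20(|V|+1)\length(G)$. The main obstacle is precisely this assembly: the cycle $\alpha$ only controls the class $au + bv$ rather than $u$, and extracting a $u$-representative requires a careful interaction between the isoperimetric filling, the combinatorics of the pieces $\Sigma_i$, and the topology of $T$; the factor $|X^1 \cap T| + 1$ reflects the number of pieces (hence the number of cycles one may need to combine), while the constant $20$ absorbs the quantitative overhead of the isoperimetric inequality.
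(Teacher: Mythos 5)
Your first step is a legitimate (and more algebraically elaborate) alternative to the paper's argument. The paper's Lemma \ref{1-complex on torus} establishes the same conclusion — $G = T\cap X^2$ carries a cycle representing $au+bv$ with $a\neq 0$ — via a short neighborhood/boundary-curves argument, whereas you route through the long exact sequence of the pair $(T,G)$, Lefschetz duality on the pieces $\Sigma_i$, and an Euler characteristic computation to conclude $L_i=0$. Both work. One thing you omit that the paper needs: before feeding anything into the isoperimetric machinery one must upgrade the representative to an \emph{embedded} closed curve in $G$ (the paper's Lemma \ref{1-complex on torus embedded version}, a simple decomposition argument). Your cycle $\alpha$ is a $1$-cycle in the graph, not a priori an embedded loop, and the paper's Lemma \ref{isoperimetric inequality on X^2} takes an embedded curve as input.

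The real problem is the second step, which you yourself flag as unfinished — and the gap is exactly where the content of the theorem lives. You correctly see that $H_1(X^2)=0$ lets you fill $\alpha$ by a $2$-chain $C$ in $X^2$ with controlled $L^\infty$-norm, and that the difficulty is extracting a pure $u$-representative from a curve in class $au+bv$. But the mechanism you sketch — ``combining arcs in $G$ with arcs across certain pieces $\Sigma_i$'' — leaves $G$, and it is not clear how the isoperimetric bound or the piece count would control the length of such a hybrid curve. The missing idea is to split the filling chain rather than the curve. Every $2$-cell of the refined complex $\widetilde{X^2}$ lies on one side of $T$ or the other; write $C=C_u+C_v$ accordingly, with $C_u$ the cells in the closure $T_u$ of the component where $u$ is trivial. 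Then $\partial C_u$ is supported on $T$ (any $1$-cell in the interior of $T_u$ has coefficient $0$ in $\partial C = \operatorname{ch}(\omega)$, hence also in $\partial C_u$), so $\partial C_u$ is a $1$-cycle on $T$. It bounds inside $T_u$, so its class in $H_1(T)$ is a multiple of $u$; similarly $\partial C_v$ is a multiple of $v$; and since $\partial C_u+\partial C_v = \operatorname{ch}(\omega)$ has class $a'u+b'v$, one gets $[\partial C_u]=a'u$ with $a'\neq 0$. The $L^\infty$ bound $\|C\|_\infty\le 5(|X^1\cap T|+1)$ passes to $C_u$, and the combinatorial bound $N(\widetilde{X^2})=4$ gives $\|\partial C_u\|_\infty\le 20(|X^1\cap T|+1)$. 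Decomposing $\partial C_u$ into loops (Lemma \ref{curves associated to cycle}) then yields a representative of $u$ in $G$ of length at most $20(|X^1\cap T|+1)\length(X^2\cap T)$, contradicting the systole hypothesis. Without this $C=C_u+C_v$ decomposition there is no route from $au+bv$ down to $u$, so the proposal as written does not close.
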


\subsection{Proof of \cref{main embedding theorem}}

Perturb $T$ infinitesimally so that it is smooth. Scale $T$ so that $\Area(T)=1$, noting that the inequality in \cref{main embedding theorem} is invariant under scaling. We wish to show that there exists $\gamma$, representing the homology class $v$, such that $$\textstyle\diam_{\mathbb{R}^3}\gamma\lesssim \sys_u(T)^{-1/3}+\sys_u(T)^{-1}.$$

Let $$m=\frac{(\sys_u(T)^{-1/3}+\sys_u(T)^{-1})^{-1}}{1000}.$$ Let $X$ be the $m^{-1}$-side length cubical lattice in $\mathbb{R}^3$. 

\begin{lemma}\label{translation} There exists a translation of $T$, denoted $T+s$, in general position with the CW structure on $X$, satisfying $$\length(X^2\cap (T+s))\leq 9 m$$ and $$|X^1\cap (T+s)|\leq 9 m^2.$$
\end{lemma}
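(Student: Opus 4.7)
The plan is to use an integral geometry / averaging argument over translations $s$ in a single fundamental domain of the lattice $X$, namely the cube $[0, m^{-1}]^3$. I will show that the average of $\length(X^2 \cap (T+s))$ over such $s$ is at most $3m$ and the average of $|X^1 \cap (T+s)|$ is at most $3m^2$. Then Markov's inequality (or a pigeonhole / union bound in measure) produces a set of $s$ of positive measure on which both bounds hold up to a factor of $3$, and a further infinitesimal perturbation inside this set yields general position.

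For the first estimate, write $X^2$ as the union of three families of axis-aligned planes with spacing $m^{-1}$. By the coarea formula applied to each coordinate function $x_i: T \to \mathbb{R}$, whose tangential gradient has norm at most $1$,
\begin{equation*}
\int_{\mathbb{R}} \length\bigl(T \cap \{x_i = t\}\bigr)\, dt \;\leq\; \Area(T) \;=\; 1.
\end{equation*}
The $x_i$-family of planes inside $(T+s) \cap X^2$ depends only on the $i$th coordinate of $s$, so integrating this family over $s \in [0, m^{-1}]^3$ picks up a factor $m^{-2}$ from the two unused coordinates and, after unfolding the periodic sum against the integral over $s_i \in [0, m^{-1}]$, yields a contribution of at most $m^{-2}$. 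Summing over the three families, $\int_{[0,m^{-1}]^3} \length(X^2 \cap (T+s))\, ds \leq 3 m^{-2}$, and dividing by the volume $m^{-3}$ gives the claimed average bound $3m$.

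For the second estimate, write $X^1$ as three families of axis-parallel lines. For the family parallel to the $x$-axis, apply the area formula to the projection $\pi_{yz}: T \to \mathbb{R}^2$ (whose Jacobian is bounded by $1$):
\begin{equation*}
\int_{\mathbb{R}^2} \bigl|T \cap \pi_{yz}^{-1}(y_0, z_0)\bigr|\, dy_0\, dz_0 \;\leq\; \Area(T) \;=\; 1.
\end{equation*}
The $x$-parallel lines of $(T+s) \cap X^1$ depend only on $s_2, s_3$, so integrating over $s \in [0, m^{-1}]^3$ contributes $m^{-1} \cdot 1$. Summing the three families gives $\int_{[0,m^{-1}]^3} |X^1 \cap (T+s)|\, ds \leq 3 m^{-1}$, whence the average is at most $3 m^2$.

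To finish, apply Markov's inequality: the set of $s$ where $\length(X^2 \cap (T+s)) > 9m$ has measure less than $\frac{1}{3} m^{-3}$, and likewise the set where $|X^1 \cap (T+s)| > 9 m^2$ has measure less than $\frac{1}{3} m^{-3}$. Their union misses a set of measure at least $\frac{1}{3} m^{-3} > 0$, so a suitable $s$ exists. The general position condition (transverse intersection of $T$ with every open cell) is open and dense, so we may choose $s$ in a full-measure subset of this good set. The only subtle point is ensuring the coarea/area formulas apply cleanly; since $T$ was perturbed to be smooth before \cref{translation} is invoked, this is automatic, so I do not anticipate a real obstacle here.
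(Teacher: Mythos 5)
Your proposal is correct and follows essentially the same route as the paper: average $\length(X^2\cap(T+s))$ and $|X^1\cap(T+s)|$ over a fundamental domain $[0,m^{-1}]^3$, bound the averages by $3m$ and $3m^2$ respectively using the direction-by-direction decomposition of $X^2$ and $X^1$, then apply Markov's inequality to each and intersect the two good sets. The only cosmetic difference is that you name the coarea and area formulas explicitly where the paper simply asserts the corresponding integral inequalities as consequences of $\Area(T)=1$.
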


\begin{proof} Note that $X^2\cap (T+s)$ and $X^1\cap (T+s)$ only depend on the image of $s$ under the map $$\mathbb{R}^3\to \mathbb{R}^3/m^{-1}\mathbb{Z}^3.$$ We claim, 

\begin{equation}\label{eq 1}\int_{s\in [0,m^{-1}]^3}\length(X^2\cap (T+s))ds\leq 3m^{-2}.
\end{equation}

To see this, note that $X^2$ is the union of parallel planes in three directions. Denote by $X^2_x$, $X^2_y$ and $X^2_z$ the union of the planes in $X$ perpendicular to the $x$ direction, $y$ direction and $z$ direction, respectively. Since $\Area(T)=1$, 
$$\int_{s=(0,s_y,s_z)}^{(m^{-1},s_y,s_z)}\length{(X^2_x\cap (T+s))}ds_x\leq 1.
$$ As $T$ is translated in the $y$ or $z$ direction, $X^2_x\cap (T+s)$ does not change. Hence $$\int_{s\in[0,m^{-1}]^3}\length(X^2_x\cap (T+s))ds\leq m^{-2}.$$ Similar equalities hold for the $y$ and $z$ directions. Adding these equalities, \cref{eq 1} follows. 

Next, we claim, \begin{equation}\label{eq 2}\int_{s\in [0,m^{-1}]^3}|X^1\cap (T+s)|ds\leq 3m^{-1}.
\end{equation}

To see this, note that $X^1$ is the union of parallel lines in three directions. Denote by $X^1_x$, $X^1_y$ and $X^1_z$ the union of the lines of $X^1$ in the $x$ direction, $y$ direction and $z$ direction, respectively. Then, $$\int_{s=(s_x,0,0)}^{(s_x,m^{-1},m^{-1})} |X^1_x\cap (T+s)|ds_yds_z\leq 1.$$ As $T$ is translated in the $x$ direction, $X^1_x\cap (T+s)$ does not change. Hence $$\int_{s\in[0,m^{-1}]^3}|X^1_x\cap (T+s)|ds\leq m^{-1}.$$ Similar equalities hold for the $y$ and $z$ directions. Adding these equalities, \cref{eq 2} follows. 

By \cref{eq 1}, $$\Vol(\{s\in [0,m^{-1}]^3|\length(X^2\cap (T+s))\leq 9m\})\geq 2m^{-3}/3.$$ By \cref{eq 2}, $$\Vol(\{s\in [0,m^{-1}]^3||X^1\cap (T+s)|\leq 9m^2\})\geq 2m^{-3}/3.$$ Therefore, some $s\in [0,m^{-1}]^3$ satisfies the conditions of the lemma. 
\end{proof}

We return to the proof of \cref{main embedding theorem} assuming \cref{main embedding theorem CW version}. By \cref{translation}, after an appropriate translation of $T$, we may assume that 
\begin{equation*}\label{eq 3}\length(X\cap T)\leq 9 m
\end{equation*} and 
\begin{equation*}\label{eq 4}|X_1\cap T|\leq 9 m^2.
\end{equation*} Note that the translation does not change any of the conditions in \cref{main embedding theorem}. 

Suppose that there is no representative $\gamma$ of $v$ on $T$ with $$\textstyle\diam_{\mathbb{R}^3}\gamma\leq 3^{1/2}m^{-1}.$$ Then no $3$-cell of $X$ contains a representative of $v$. By \cref{main embedding theorem CW version}, 
\begin{align*}\textstyle\sys_u(T)&\leq 20\cdot (|X_1\cap T|+1)\cdot \length(X_2\cap T)\\&\leq 1620m^3+180m \\& \leq  \frac{(\sys_u(T)^{-1/3}+\sys_u(T)^{-1})^{-3}}{617283}+\frac{(\sys_u(T)^{-1/3}+\sys_u(T)^{-1})^{-1}}{4}\\&<\textstyle\sys_u(T)
\end{align*}
which is a contradiction. Therefore, there is a representative $\gamma$ of $v$ on $T$ with \begin{align*}\textstyle\diam_{\mathbb{R}^3}\gamma &\leq 3^{1/2}m^{-1}\\&\leq 3^{1/2}\cdot 1000\cdot \textstyle(\sys_u(T)^{-1/3}+\sys_u(T)^{-1}),
\end{align*}
completing the proof of \cref{main embedding theorem}.

\subsection{Roadmap to prove \cref{main embedding theorem CW version}} \label{technical overview of proof of main theorem} Assume the contrary, that no $3$-cell of $X$ contains a representative of $v$. We first prove that this implies that the $2$-skeleton $X^2$ contains a representative of $u$. While this representative may not be embedded, if $|X^1\cap T|$ is small, we prove that its multiplicity is bounded. So if $\length(T\cap X^2)$ is also small, then $\sys_u(T)$ must be bounded above, which gives a contradiction. 

Let us describe the first step of the proof in slightly more detail. We will prove that if no $3$-cell of $X$ contains a representative of $v$, then $X^2$ contains an embedded representative of some $au+bv$ with $a$ nonzero. Since $H_1(X^2)=0$, this curve may be filled by a $2$-chain on $X^2$. Because $u$ and $v$ are the only homology classes which are trivial in the complement of $T$, the $2$-chain must actually contain a representative of $u$. To bound the multiplicity of the resulting representative, we bound the multiplicity of the $2$-chain. To this end, in \cref{isoperimetric section}, we prove a type of isoperimetric inequality on $X^2$. Then in \cref{proof of CW intersection}, we prove \cref{main embedding theorem CW version} by implementing the strategy above. 

\section{An isoperimetric inequality on $X^2$} \label{isoperimetric section} 

The goal of this section is: given an embedded curve on $X^2$, we find a $2$-chain on $X^2$ of bounded multiplicity filling the curve. To define the notion of multiplicity precisely, we work with cellular chains. 

\subsection{Cellular chains associated to arcs and curves} Let $Y$ be a CW complex. A cellular $1$-chain is a linear combination of $1$-cells. We associate a cellular chain to an arc or curve on the $1$-skeleton of $Y$. In the following, $Y$ may be an infinite CW complex, but arcs and curves on $Y$ are assumed to be finite.

\begin{definition} Let $\sigma$ be a $1$-cell of $Y$. The quotient map $$q_{\sigma}:Y^1\to Y^1/(Y^1-\sigma)\simeq S^1$$ is formed by collapsing $Y^1-\sigma$ to a point. 
\end{definition}

\begin{definition}\label{coefficients of ch} Let $\gamma:[0,1]\to Y^1$ be an arc (or $\gamma:S^1\to Y^1$ a closed curve) in the $1$-skeleton of $Y$ with $\partial \gamma\subset Y^0$. (If $\gamma$ is a closed curve, we consider it as an arc with both endpoints the same $0$-cell.) For each $1$-cell $\sigma$, $q_\sigma\circ \gamma$ is a curve on $Y^1/(Y^1-\sigma)\simeq S^1$ with endpoints at the same point. We denote by $\pi_\sigma(\gamma)$ the element of $\pi_1(S^1)\simeq \mathbb{Z}$ determined by $q_\sigma\circ \gamma$. When $\gamma$ is a closed curve, $$\pi_\sigma(\gamma)=\deg(q_\sigma\circ \gamma).$$ In this case, $\pi_\sigma(\gamma)$ is independent of choice of basepoint on $\gamma$.
\end{definition}

\begin{definition}\label{ch} Let $\gamma$ be an arc or closed curve on $Y^1$. The cellular $1$-chain associated to $\gamma$ is $$\ch(\gamma)=\sum_{\sigma \text{ is a }1\text{-cell in } Y^1}\pi_\sigma(\gamma)\sigma.$$
\end{definition}

\subsection{Basic properties of associated chains}

Associated chains add when arcs concatenate.

\begin{lemma}\label{ch concatenation} Let $\gamma$ be a concatenation of arcs $\gamma_1$ and $\gamma_2$ in $Y^1$ with boundary in $Y^0$. Then $$\ch(\gamma)=\ch(\gamma_1)+\ch(\gamma_2).$$
\end{lemma}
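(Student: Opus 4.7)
The plan is to reduce the claim, which is a statement about a formal sum of $1$-cells, to a cell-by-cell statement about winding numbers. Fix a $1$-cell $\sigma$ of $Y$. By the definitions of $\ch$ and of the quotient map $q_\sigma$, it suffices to show
\[
\pi_\sigma(\gamma)=\pi_\sigma(\gamma_1)+\pi_\sigma(\gamma_2),
\]
since summing this equation over all $1$-cells yields the lemma.

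The key observation is that $q_\sigma$ collapses $Y^1-\sigma$, and in particular the entire $0$-skeleton $Y^0$, to the basepoint of $S^1=Y^1/(Y^1-\sigma)$. By hypothesis the endpoints of $\gamma$, $\gamma_1$, and $\gamma_2$ all lie in $Y^0$. Therefore $q_\sigma\circ \gamma_1$, $q_\sigma\circ \gamma_2$, and $q_\sigma\circ \gamma$ are all \emph{based loops} at the basepoint of $S^1$, and we have the equality of based loops
\[
q_\sigma\circ \gamma=(q_\sigma\circ \gamma_1)*(q_\sigma\circ \gamma_2),
\]
simply because $q_\sigma$ is a map and $\gamma=\gamma_1*\gamma_2$ by assumption.

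The final step is to translate this into the $\mathbb{Z}$-valued invariant $\pi_\sigma$. Since $\pi_1(S^1)\cong \mathbb{Z}$ is abelian and the class of a concatenation of based loops is the sum of their classes, we obtain $\pi_\sigma(\gamma)=\pi_\sigma(\gamma_1)+\pi_\sigma(\gamma_2)$ as desired. There is no serious obstacle here; the only subtlety is bookkeeping in the case where $\gamma$ is a closed curve (so one writes it as an arc with both endpoints at a common $0$-cell), where one must check that the basepoint chosen for measuring $\pi_\sigma$ is consistent with the concatenation, but this is precisely the basepoint-independence remarked upon in Definition \ref{coefficients of ch}.
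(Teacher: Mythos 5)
Your proof is correct and takes essentially the same approach as the paper: reduce to the per-cell identity $\pi_\sigma(\gamma)=\pi_\sigma(\gamma_1)+\pi_\sigma(\gamma_2)$ and observe that $q_\sigma$ sends the concatenation $\gamma=\gamma_1*\gamma_2$ to the product of based loops $(q_\sigma\circ\gamma_1)*(q_\sigma\circ\gamma_2)$ in $\pi_1(S^1)\cong\mathbb{Z}$. Your version just spells out more explicitly why the loops are based (the endpoints lie in $Y^0$, which $q_\sigma$ collapses) and flags the basepoint-independence remark.
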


\begin{proof} It suffices to show that for all $1$-cells $\sigma$, $\pi_\sigma(\gamma)=\pi_\sigma(\gamma_1)+\pi_\sigma(\gamma_2)$. This is true since $q_\sigma\circ \gamma$ is the product of $q_\sigma\circ \gamma_1$ and $q_\sigma\circ \gamma_2$ in $\pi_1(Y^1/(Y^1-\sigma))$.
\end{proof}

The space of cellular $1$-chains on $Y$ is identified with $H_1(Y^1,Y^0)$. Under this identification, associated chains are simply the pushforward of a fundamental class.

\begin{lemma}\label{ch is pushforward of fundamental class} For an arc $\gamma$, the associated chain $\ch(\gamma)$ is the pushforward of the fundamental class of $[0,1]$ under the map $$\gamma_*:H_1([0,1],\partial [0,1])\to H_1(Y^1,Y^0).$$ For a closed curve $\gamma$, the associated chain $\ch(\gamma)$ is the pushforward of the fundamental class of $S^1$ under the composition $$H_1(S^1)\overset{\gamma_*}\to H_1(Y^1)\to H_1(Y^1,Y^0).$$
\end{lemma}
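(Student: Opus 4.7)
The plan is to check the equality of the two chains coefficient by coefficient, using the quotient maps $q_\sigma$ as a way to extract the coefficient at the $1$-cell $\sigma$ from the relative homology group $H_1(Y^1,Y^0)$. Recall the standard identification of cellular $1$-chains on $Y$ with $H_1(Y^1,Y^0)$, under which each $1$-cell $\sigma$ corresponds to its relative fundamental class $[\sigma]\in H_1(Y^1,Y^0)$. The key elementary observation is that $q_\sigma$ descends to a map of pairs $(Y^1,Y^0)\to (S^1,*)$ (since $Y^0\subset Y^1-\sigma$), and the induced map on relative $H_1$ sends $[\sigma]$ to a generator of $H_1(S^1,*)\cong\mathbb{Z}$ while sending $[\tau]$ to $0$ for every other $1$-cell $\tau$. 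Consequently, for any class $\alpha\in H_1(Y^1,Y^0)$, the integer $(q_\sigma)_*(\alpha)\in\mathbb{Z}$ is exactly the coefficient of $\sigma$ in $\alpha$, viewed as a cellular $1$-chain.

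Given this, the arc case follows by naturality. Let $[\iota]\in H_1([0,1],\partial[0,1])$ denote the fundamental class, and write $\alpha=\gamma_*[\iota]\in H_1(Y^1,Y^0)$. For each $1$-cell $\sigma$, functoriality gives
\begin{equation*}
(q_\sigma)_*(\alpha)=(q_\sigma)_*\gamma_*[\iota]=(q_\sigma\circ\gamma)_*[\iota],
\end{equation*}
and the rightmost integer is by definition $\pi_\sigma(\gamma)$, since for a map $([0,1],\partial[0,1])\to (S^1,*)$ the induced map on relative $H_1\cong\mathbb{Z}$ records exactly the element of $\pi_1(S^1)\cong\mathbb{Z}$ that the map represents. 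Comparing with the formula for $\ch(\gamma)$ in Definition~\ref{ch} gives $\alpha=\ch(\gamma)$.

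For the closed curve case, the argument is the same after inserting the natural map $H_1(Y^1)\to H_1(Y^1,Y^0)$. If $[S^1]$ denotes the fundamental class of $S^1$ and $\beta$ denotes its image in $H_1(Y^1,Y^0)$ under the composition in the statement, then naturality of the pair $(q_\sigma)_*$ with respect to $(S^1,\varnothing)\to(S^1,*)$ shows
\begin{equation*}
(q_\sigma)_*(\beta)=\deg(q_\sigma\circ\gamma)=\pi_\sigma(\gamma),
\end{equation*}
since the map $H_1(S^1)\to H_1(S^1,*)\cong\mathbb{Z}$ is exactly the degree. Again this matches the coefficient of $\sigma$ in $\ch(\gamma)$.

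There is no real obstacle here beyond bookkeeping; the lemma is a direct unwinding of the cellular-singular comparison together with the definition of $\pi_\sigma$ as a winding number. The one point that needs care is verifying that $(q_\sigma)_*$ really does extract coefficients as claimed, which can be done by checking it on the generators $[\tau]$ for each $1$-cell $\tau$: for $\tau\ne\sigma$, the map $q_\sigma$ collapses $\tau$ to the basepoint, so $(q_\sigma)_*[\tau]=0$; for $\tau=\sigma$, it restricts to a homeomorphism onto $S^1\setminus\{*\}$ and sends the relative fundamental class to a generator.
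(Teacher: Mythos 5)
Your proof is correct and takes essentially the same approach as the paper: both identify $H_1(Y^1,Y^0)$ with cellular $1$-chains, observe that the $\sigma$-coefficient is read off by the quotient to $S^1$, and conclude by functoriality that this recovers $\pi_\sigma(\gamma)$. The only cosmetic difference is that the paper passes through the intermediate quotient $Y^1/Y^0$ (a wedge of circles) before collapsing to $Y^1/(Y^1-\sigma)$, whereas you collapse directly via $q_\sigma$; the two compositions are the same map.
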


\begin{proof} There is an identification $$H_1([0,1],\partial [0,1])\simeq H_1(S^1)$$ and $$H_1(Y^1,Y^0)\simeq H_1(Y^1/Y^0)\simeq \mathbb{Z}^{|\text{1-cells of Y}|}.$$ The space $Y^1/Y^0$ is a wedge of circles. Let $$q:(Y^1,Y^0)\to (Y^1/Y^0,\{\text{pt}\})$$ be the quotient map. An curve $\gamma$ on $Y^1$, or an arc $\gamma$ on $Y^1$ with $\partial\gamma\subset Y^0$, is sent to a closed curve $q(\gamma)$ on $Y^1/Y^0$. 

In either case of whether $\gamma$ is an arc or a closed curve, $q(\gamma)$ is a closed curve on $Y^1/Y^0$. Viewing $q(\gamma):S^1\to Y^1/Y^0$ as a map, the pushforward of the relevant fundamental class in the lemma statement is the pushforward of the fundamental class of $S^1$ with respect to map induced by $q(\gamma)$ on homology. This gives a element $H_1(Y^1/Y^0)$.

Given a $1$-cell $\sigma$ of $Y$, the $\sigma$-coefficient of this element may be obtained by further quotienting to $Y^1/(Y^1-\sigma)$ and taking its image in homology. This recovers $\pi_\sigma(\gamma)$ from \cref{coefficients of ch}. 
\end{proof}

Associated chains are also natural with respect to cellular maps.

\begin{lemma}\label{ch is natural} If $f:Y\to Z$ is a cellular map and $\gamma:S^1\to Y^1$ is a closed curve, then $$f_*\ch(\gamma)=\ch(f(\gamma)).$$
\end{lemma}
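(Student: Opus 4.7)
The plan is to reduce this naturality statement to functoriality of homology, via the identification of $\ch(\gamma)$ with the pushforward of a fundamental class provided by \cref{ch is pushforward of fundamental class}.

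First I would unpack what each side means. The associated chain $\ch(\gamma)$ lives in the space of cellular $1$-chains of $Y$, identified with $H_1(Y^1,Y^0)$, and by \cref{ch is pushforward of fundamental class} it is the image of the fundamental class $[S^1]\in H_1(S^1)$ under
\[
H_1(S^1)\xrightarrow{\gamma_*} H_1(Y^1)\to H_1(Y^1,Y^0).
\]
Similarly, since $f$ is cellular, $f\circ\gamma:S^1\to Z^1$ is a closed curve on $Z^1$, and $\ch(f(\gamma))$ is the image of $[S^1]$ under
\[
H_1(S^1)\xrightarrow{(f\circ\gamma)_*} H_1(Z^1)\to H_1(Z^1,Z^0).
\]
The pushforward $f_*$ on cellular chains is precisely the map $H_1(Y^1,Y^0)\to H_1(Z^1,Z^0)$ induced by the pair map $f:(Y^1,Y^0)\to (Z^1,Z^0)$, which makes sense because $f$ is cellular.

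Next I would write down the commutative diagram
\[
\begin{tikzcd}
H_1(S^1) \arrow[r,"\gamma_*"] \arrow[d,equal] & H_1(Y^1) \arrow[r] \arrow[d,"f_*"] & H_1(Y^1,Y^0) \arrow[d,"f_*"] \\
H_1(S^1) \arrow[r,"(f\circ\gamma)_*"'] & H_1(Z^1) \arrow[r] & H_1(Z^1,Z^0)
\end{tikzcd}
\]
The left square commutes by functoriality of $H_1$ applied to $f\circ\gamma=f\circ\gamma$, and the right square commutes by naturality of the long exact sequence of the pair under the cellular map $f$. Chasing $[S^1]$ around the diagram in two ways then gives
\[
f_*\ch(\gamma)=f_*\bigl((\text{image of }[S^1])\bigr)=\ch(f(\gamma)),
\]
which is the desired equality.

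There is no real obstacle here; the only thing to be careful about is that $f$ being cellular is exactly what ensures that $f$ restricts to a map of pairs $(Y^1,Y^0)\to(Z^1,Z^0)$, so that the induced map on relative $H_1$ agrees with the naive pushforward of cellular chains. Once that is checked, the rest is pure diagram chase.
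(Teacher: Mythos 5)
Your proof is correct and is essentially the same as the paper's: the paper also notes that cellularity of $f$ yields the pair map $(Y^1,Y^0)\to(Z^1,Z^0)$, and then invokes \cref{ch is pushforward of fundamental class} together with functoriality of homology. You have simply spelled out the diagram chase that the paper leaves implicit.
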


\begin{proof} Since $f$ is cellular, $f:(Y_1,Y_0)\to (Z_1,Z_0)$. So we have $f_*:H_1(Y_1,Y_0)\to H_1(Z_1,Z_0)$. The lemma follows from \cref{ch is pushforward of fundamental class} and functoriality of homology.
\end{proof}

The homology class of a cellular chain associated to a closed curve $\gamma$ also agrees with the singular homology class $[\gamma]$ the curve determines. 

\begin{lemma}\label{hom of ch} Let $\gamma$ be a closed curve in $Y^1$. The cellular chain $\ch(\gamma)$ is a $1$-cycle, and the corresponding cellular homology class $[\ch(\gamma)]$ is identified with $[\gamma]$ under the isomorphism between cellular homology and singular homology.
\end{lemma}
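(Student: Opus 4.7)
The plan is to deduce both claims directly from \cref{ch is pushforward of fundamental class}, combined with the standard construction of the isomorphism between cellular and singular homology.

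For the cycle property, I would use the identification $C_1^{CW}(Y) \simeq H_1(Y^1, Y^0)$ under which the cellular boundary map corresponds to the connecting homomorphism $\delta \colon H_1(Y^1, Y^0) \to H_0(Y^0)$ in the long exact sequence of the pair $(Y^1, Y^0)$. A chain is a cycle precisely when it lies in the image of $H_1(Y^1) \to H_1(Y^1, Y^0)$, equivalently in $\ker \delta$. Since \cref{ch is pushforward of fundamental class} expresses $\ch(\gamma)$ as the image of $\gamma_*[S^1] \in H_1(Y^1)$ under that very map, it is automatically a cycle.

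For the identification of homology classes, I would invoke the explicit construction of the isomorphism $H_1^{CW}(Y) \simeq H_1(Y)$: a cellular $1$-cycle $[c]$ is sent to $H_1(Y)$ by first lifting $c$ to a class $\tilde{c} \in H_1(Y^1)$ (possible since $c$ is a cycle), and then pushing $\tilde{c}$ forward along the inclusion $Y^1 \hookrightarrow Y^2 \hookrightarrow Y$. Well-definedness amounts to the statement that the kernel of $H_1(Y^1) \to H_1(Y^2)$ is generated by attaching maps of $2$-cells, i.e.\ precisely the cellular $2$-boundaries, and that $H_1(Y^2) \to H_1(Y)$ is an isomorphism. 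Applied to our chain, the natural lift supplied by \cref{ch is pushforward of fundamental class} is $\gamma_*[S^1] \in H_1(Y^1)$, and its image in $H_1(Y)$ is by definition the singular homology class $[\gamma]$.

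I do not anticipate a serious obstacle: the entire argument is a formal consequence of \cref{ch is pushforward of fundamental class} together with the functoriality of singular homology. The only point requiring care is to match the algebraic description of the cellular-to-singular isomorphism with the geometric pushforward of fundamental classes, which is a short diagram chase using the long exact sequences of the pairs $(Y^1,Y^0)$ and $(Y^2,Y^1)$.
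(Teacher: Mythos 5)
Your proposal is correct and follows essentially the same approach as the paper: both arguments rest on \cref{ch is pushforward of fundamental class} to realize $\ch(\gamma)$ as the image of $\gamma_*[S^1]\in H_1(Y^1)$ under $H_1(Y^1)\to H_1(Y^1,Y^0)$, from which the cycle property and the identification of homology classes both follow via the standard construction of the cellular-to-singular isomorphism through the long exact sequence. The only cosmetic difference is that you route through $H_1(Y^2)$, whereas the paper passes directly to $H_1(Y)$ using $H_1(Y,Y^1)=0$.
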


\begin{proof} Inside $H_1(Y^1,Y^0)$, the space of cycles is canonically isomorphic to $H_1(Y^1)$. This isomorphism is induced by the inclusion map $j_*:H_1(Y^1)\to H_1(Y^1,Y^0)$ (see \cite[Theorem 2.35]{Hat02}). By \cref{ch is pushforward of fundamental class}, $\ch(\gamma)=j_*([\gamma])$ (in particular, it lies in $H_1(Y^1)$). 

To build the isomorphism $$H_1(Y)\simeq H_1^{\cell}(Y),$$ we have a long exact sequence
$$H_2(Y,Y^1)\overset{\delta}\to H_1(Y^1)\to H_1(Y)\to H_1(Y,Y^1)\simeq 0$$ which identifies $H_1(Y)\simeq H_1(Y^1)/\im(\delta)$. Because $\im(\delta)$ is identified with the boundaries in $H_1(Y^1)$, we obtain an isomorphism between $H_1(Y)$ and $H_1^{\cell}(Y)$. This isomorphism identifies the singular chain $[\gamma]$ with the cellular chain $j_*([\gamma])$.
\end{proof}

\subsection{Norms of cellular chains}

In this section, we define the $\ell^1$ and $\ell^\infty$ norms on spaces of cellular chains.

\begin{definition} Let $$\alpha=\sum_{\sigma \text{ is a }1\text{-cell in } Y}c_\sigma \sigma$$ be a cellular $2$-chain. The $\ell^1$-norm of $\alpha$ is $$\|\alpha\|_1=\sum_{\sigma \text{ is a }1\text{-cell in } Y}|c_\sigma|.$$
\end{definition}

\begin{lemma}\label{curves associated to cycle} Let $\alpha$ be a cellular $1$-cycle on $Y$. There exist embedded curves $$\gamma_1,...,\gamma_k:S^1\to Y^1$$ such that $$\alpha=\sum_{i=1}^k \ch(\gamma_i)$$ and $$\|\alpha\|_1= \sum_{i=1}^k \|\ch(\gamma_i)\|_1.$$
\end{lemma}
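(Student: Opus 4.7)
The plan is to reduce the statement to a standard combinatorial decomposition of integer $1$-cycles into simple closed walks on the finite support graph. First I would reinterpret $\alpha=\sum c_\sigma \sigma$ as a directed multigraph $G$ on the finite vertex set $Y^0\cap \supp(\alpha)$: for each $1$-cell $\sigma$ with $c_\sigma\ne 0$, replace $\sigma$ by $|c_\sigma|$ parallel directed edges, oriented from the initial to the terminal $0$-cell of $\sigma$ if $c_\sigma>0$ and reversed if $c_\sigma<0$. The cycle condition $\partial\alpha=0$ translates exactly into the fact that at every vertex of $G$ the directed in-degree equals the directed out-degree.

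Next I would extract embedded closed curves one at a time. Pick any edge of $G$ and walk forward along directed edges; the in/out balance at every intermediate vertex guarantees that the walk can always be continued to an unused outgoing edge. Since $G$ is finite, some vertex is eventually revisited; stopping at the \emph{first} such vertex $v$, the sub-walk between its two visits is an embedded closed curve $\gamma_1:S^1\to Y^1$. By \cref{ch concatenation} applied to the edge-by-edge factorization of $\gamma_1$, the associated chain $\ch(\gamma_1)$ is the sum $\sum \epsilon_\sigma \sigma$ over the traversed $1$-cells, with $\epsilon_\sigma=\sign(c_\sigma)$ at each such $\sigma$ by construction of $G$. In particular $\ch(\gamma_1)$ shares signs with $\alpha$ on its support, so $\alpha-\ch(\gamma_1)$ is again a cellular $1$-cycle and
$$\|\alpha-\ch(\gamma_1)\|_1=\|\alpha\|_1-\|\ch(\gamma_1)\|_1.$$

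Iterating this extraction strictly decreases $\|\alpha\|_1$, so the process terminates after finitely many steps $k$ with $\alpha=\sum_{i=1}^k \ch(\gamma_i)$, and telescoping the norm identity above gives the stated additivity. The main obstacle I expect is bookkeeping rather than conceptual: one must check (a) that the walk never has to traverse a $1$-cell against the sign of $c_\sigma$, which is ensured because $G$ only contains edges in the ``allowed'' direction and the in/out balance is preserved at every subtraction step, and (b) that stopping at the first repeated vertex really produces an embedded $\gamma_i$ rather than a figure-eight-type loop, which is immediate since no intermediate vertex of the sub-walk was visited twice by the choice of $v$. Both points are precisely the classical Eulerian decomposition argument adapted to the signed (directed) setting.
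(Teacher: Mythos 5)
Your proposal is correct and follows essentially the same approach as the paper: both walk along $1$-cells in the direction dictated by the sign of the coefficient, using the cycle condition $\partial\alpha=0$ to keep extending the walk, stop at the first repeated vertex to extract an embedded closed curve $\gamma_i$, and induct on $\|\alpha\|_1$ using the sign compatibility to ensure the norm drops by exactly $\|\ch(\gamma_i)\|_1$. The directed-multigraph packaging and the explicit remark about stopping at the first repeated vertex are slightly more explicit than the paper's phrasing, but the argument is the same Eulerian-type decomposition.
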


\begin{proof} We do induction on $\|\alpha\|_1$. Let $$\alpha=\sum_{\sigma \text{ is a }1\text{-cell in } Y}c_\sigma \sigma.$$ View $Y^1$ as a graph with the $0$-cells as vertices and $1$-cells as edges. Let $\sigma_1$ be a $1$-cell in $Y$ with $c_{\sigma_1}\neq 0$. Suppose $c_{\sigma_1}>0$. If $\partial \sigma_1=0$, then $\sigma_1$ is a closed curve. Reducing $|c_{\sigma_1}|$ by $1$, we have decreased $\|\alpha\|_1$. So assume $\partial \sigma_1=x_1-x_0$. Since $\partial \alpha=0$, there exists a $1$-cell $\sigma_2$ such that $$\partial \sign(c_{\sigma_2})\sigma_2=x_2-x_1$$ for some vertex $x_2$. If $x_2=x_0$, the edges $\sigma_1$ and $\sigma_2$ produce a closed curve, and we again eliminate the curve and reduce $\|\alpha\|_1$. Otherwise, since $\partial \alpha=0$, there exists a $1$-cell $\sigma_3$ such that $$\partial \sign(c_{\sigma_3})\sigma_3=x_3-x_2$$ for some vertex $x_3$. Continuing this process, we produce a string of $1$-cells whose endpoint vertex is a vertex already reached. This produces a closed curve $\gamma$ we may eliminate to reduce $\|\alpha\|_1$. By construction, $$\|\alpha\|_1=\|\alpha-\ch(\gamma)\|_1+\|\ch(\gamma)\|_1.$$ This completes the induction step.
\end{proof}

\begin{definition} For $p=1,2$, let $$\alpha=\sum_{\sigma \text{ is a }p\text{-cell in } Y}c_\sigma \sigma$$ be a cellular $p$-chain. The $\ell^\infty$-norm of $\alpha$ is $$\|\alpha\|_\infty=\sup_{\sigma \text{ is a }p\text{-cell in } Y} c_\sigma.$$
\end{definition}

\begin{definition} \label{def of N} Given a $1$-cell $\chi$, let $N(\chi)$ be the number of $2$-cells $\sigma$ so that the coefficient of $\sigma$ in $\partial \chi$ is nonzero. Let $$N(Y)=\sup_{\chi \text{ is a }1\text{-cell in } Y}N(\chi).$$ 
\end{definition}

\begin{lemma}\label{boundary of k embedded chain} Let $\alpha$ be a cellular $2$-chain on $Y$. Then $$\|\partial \alpha\|_\infty\leq N(Y)\|\alpha\|_\infty.$$
\end{lemma}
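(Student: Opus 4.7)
The plan is to unpack $\partial\alpha$ one coefficient at a time and apply the triangle inequality. Write $\alpha=\sum_\sigma c_\sigma\sigma$, where $\sigma$ ranges over the $2$-cells of $Y$. By linearity of the cellular boundary map,
\[
\partial\alpha=\sum_{\sigma}c_\sigma\,\partial\sigma=\sum_{\chi}\Bigl(\sum_{\sigma}c_\sigma\,[\chi:\sigma]\Bigr)\chi,
\]
where $\chi$ ranges over $1$-cells and $[\chi:\sigma]$ denotes the incidence coefficient (i.e.\ the coefficient of $\chi$ in $\partial\sigma$, which is what \cref{def of N} refers to when counting).

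Next I would fix a $1$-cell $\chi$ and estimate the coefficient of $\chi$ in $\partial\alpha$ separately. By \cref{def of N}, at most $N(\chi)\le N(Y)$ of the incidences $[\chi:\sigma]$ are nonzero. In the cubical complex $X^2$ to which the lemma will be applied, each nonzero incidence satisfies $|[\chi:\sigma]|=1$. Hence by the triangle inequality,
\[
\Bigl|\sum_{\sigma}c_\sigma\,[\chi:\sigma]\Bigr|\;\le\;N(\chi)\cdot\max_{\sigma}|c_\sigma|\;\le\;N(Y)\,\|\alpha\|_\infty.
\]
Taking the supremum over $\chi$ gives $\|\partial\alpha\|_\infty\le N(Y)\|\alpha\|_\infty$.

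There is no real obstacle here; the statement is a standard sup-norm bound on a cellular boundary operator, and the only thing to check is the bookkeeping of incidence coefficients, which is trivial for a cubical complex. I would write exactly the two-line computation above as the full proof.
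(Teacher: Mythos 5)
Your proof is correct and is essentially the paper's argument: expand $\partial\alpha$ by linearity, fix a $1$-cell $\chi$, and bound its coefficient using $N(\chi)\le N(Y)$. You also correctly flag the tacit assumption that nonzero incidence coefficients $[\chi:\sigma]$ have absolute value $1$ --- the paper's proof uses this silently (the inequality would fail for a general CW complex without it), and it does hold for the cubical refinement $\widetilde{X^2}$ where the lemma is applied.
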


\begin{proof} We must show that the coefficient of any $1$-cell $\chi$ in the expansion of $\partial \alpha$ is at most $N(Y)k$. Writing 
\begin{align*}\partial\alpha&=\partial\left(\sum_{\sigma \text{ is a }2\text{-cell in } Y}c_\sigma \sigma\right)\\&=\sum_{\sigma \text{ is a }2\text{-cell in } Y}c_\sigma \partial \sigma,
\end{align*} the only contributions to the coefficient of $\chi$ in $\partial \alpha$ come from terms associated to $\sigma$ such that the $\chi$-coefficient in $\partial \sigma$ is nonzero. By assumption, there are at most $N(Y)$ such $\sigma$. Since $|c_\sigma|\leq k$, the lemma follows.
\end{proof}

\subsection{Isoperimetric inequality on $X^2$} Let $X$ be the CW complex associated to a cubical lattice in $\mathbb{R}^3$. Equip $X$ with the standard CW structure, and let $X^2$ and $X^1$ be the associated $2$-skeleton and $1$-skeleton, respectively. We first prove an isoperimetric inequality for curves on $X^1$.

\begin{lemma} \label{isoperimetric inequality on X^1} Let $\alpha$ be a cellular $1$-cycle on $X$. There exists a cellular $2$-chain $C$ on $X$ such that $\partial C=\alpha$ and $\|C\|_\infty\leq \|\alpha\|_1$.
\end{lemma}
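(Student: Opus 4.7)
The plan is to construct $C$ by a vertical projection argument: reduce the 3D filling to a 2D isoperimetric inequality (which is easy via winding numbers), then bridge the planar filling back to $\alpha$ with vertical strips.

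Fix $z_0\in\mathbb{Z}$ strictly below the support of $\alpha$, and let $\pi\colon X\to X\cap\{z=z_0\}$ denote vertical projection, a cellular map collapsing each $z$-edge to a point and sending each horizontal edge $e$ to its parallel copy $\bar e$ at height $z_0$. By \cref{ch is natural}, $\bar\alpha:=\pi_*\alpha$ is a cellular 1-cycle on the planar sublattice at $z=z_0$. Fill $\bar\alpha$ in the plane by the standard winding-number construction: assign to each horizontal 2-cell $\sigma$ at $z=z_0$ the winding number of $\bar\alpha$ around any interior point of $\sigma$, producing a 2-chain $\bar C$ with $\partial\bar C=\bar\alpha$. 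Since any winding number is bounded by the signed intersection of $\bar\alpha$ with a half-ray, $\|\bar C\|_\infty\leq\|\bar\alpha\|_1\leq\|\alpha\|_1$.

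For the bridge, for each horizontal edge $e$ of $\alpha$ at height $z_e$ I would let $V_e$ be the vertical rectangular strip of 2-cells in $X^2$ joining $\bar e$ at $z=z_0$ to $e$ at $z=z_e$, and set $V=\sum_e c_e V_e$ and $C=\bar C-V$. The horizontal 2-cells of $\bar C$ at $z=z_0$ and the vertical 2-cells of $V$ lie in disjoint subsets of $X^2$, so $\|C\|_\infty=\max(\|\bar C\|_\infty,\|V\|_\infty)$. The coefficient of $V$ at a vertical 2-cell at height $z\in[k,k+1]$ in a fixed column is a partial sum $\sum_{z_e>k}c_e$ over horizontal edges of $\alpha$ lying above that cell in its column, whose absolute value is at most $\|\alpha\|_1$. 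Combined with the bound on $\bar C$, this gives $\|C\|_\infty\leq\|\alpha\|_1$.

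The main task left is verifying $\partial V=\bar\alpha-\alpha$, from which $\partial C=\alpha$ follows. A direct computation shows $\partial V_e=\bar e-e+R_+-R_-$, where $R_\pm$ are the two vertical columns of $z$-edges based at the endpoints of $e$ running from $z_0$ up to $z_e$. Summing against $c_e$, the $\bar e-e$ contribution gives $\bar\alpha-\alpha_{\mathrm{horiz}}$, while the coefficient at a given $z$-edge $e^z_{a,b,k}$ equals $\sum_{z>k}$ of the horizontal divergence of $\alpha$ at the vertex $(a,b,z)$. The cycle condition $\partial\alpha=0$ rewrites this horizontal divergence as $c_{e^z_{a,b,z}}-c_{e^z_{a,b,z-1}}$, so the sum telescopes to $-c_{e^z_{a,b,k}}$ by the bounded support of $\alpha$. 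This recovers $-\alpha_{\mathrm{vert}}$ and closes the verification. The telescoping step is the main obstacle, since it is where the cycle hypothesis on $\alpha$ enters decisively; the remaining technical care is in tracking orientation conventions so that the signs line up.
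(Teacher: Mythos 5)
Your proof is correct, and it takes a genuinely different route from the paper's. The paper first reduces to the case of an embedded closed curve via \cref{curves associated to cycle}, then argues by induction on the number of edges: it encodes the curve as a word in $x^{\pm1}, y^{\pm1}, z^{\pm1}$, locates an innermost cancellable pair, and performs transpositions (each a single square of $X^2$, all distinct by the innermost assumption) to bring the pair adjacent and eliminate it. Each induction step contributes a $2$-chain with $L^\infty$-norm at most $1$, giving the bound $\|C\|_\infty \le |\omega| = \|\alpha\|_1$. Your approach instead projects the whole cycle to a horizontal slice, fills the planar projection by winding numbers, and bridges back with vertical strips; the cycle condition on $\alpha$ enters through the telescoping sum rather than through edge-cancellation in a word. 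Both yield exactly the same constant. Your proof is somewhat more direct in that it works with a general cellular $1$-cycle at once and never needs the reduction to embedded curves, and the winding-number filling is a standard device that makes the $L^\infty$ bound on the horizontal part transparent. The paper's inductive word argument is more local and self-contained (no projection, no divergence/telescoping bookkeeping), but requires a bit of care to see that the transposition squares are distinct. Neither is clearly simpler; they are comparable, and your version would serve equally well in the paper.

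One small point of care in your write-up: the bound $\|V\|_\infty \le \|\alpha\|_1$ uses that a fixed vertical $2$-cell receives contributions only from horizontal edges $e$ in a single column (those projecting to the fixed bottom edge $\bar e$), so the partial sum $\sum_{z_e>k} c_e$ is genuinely a sub-sum of $\alpha$'s coefficients in absolute value. You state this implicitly via ``in its column''; it is worth making explicit that edges in different columns cannot interfere because a vertical $2$-cell of the cubical lattice sits over exactly one horizontal edge. With that clarified, the argument is complete.
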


\begin{proof} By \cref{curves associated to cycle}, we reduce to the case where $\alpha=\ch(\omega)$ for an embedded closed curve $\omega$ on $X^1$. View the $1$-complex $X^1$ as a graph. We prove a slightly more general statement. Let $\omega$ be a locally embedded closed curve on $X^1$, meaning that $\omega$ is the curve associated to a cycle of adjacent edges of $X^1$. Let $|\omega|$ be the number of edges in the cycle associated to $\omega$. We fill $\ch(\omega)$ with a $2$-chain $C$ such that $\|C\|_\infty\leq |\omega|$. (When $\omega$ is actually embedded, $|\omega|=\|\ch(\omega)\|_1$, so the lemma follows.) To do this, we will do induction on $|\omega|$.  

Each (undirected) edge of the graph $X^1$ is in either the $x$, $y$ or $z$ direction. An (oriented) locally embedded curve $\omega$ gives a word in $s_\omega$ in six letters: $x$, $y$, $z$, $x^{-1}$, $y^{-1}$ and $z^{-1}$. Adjacent pairs of $x$ and $x^{-1}$, $y$ and $y^{-1}$, or $z$ and $z^{-1}$ may be eliminated since they cancel in $\ch(\omega)$. A transposition in the word corresponds to moving an arc of $\omega$ across a square cell of $X^2$. For example, a transposition between $x$ and $y$ corresponds to moving an arc of $\omega$ across a square cell of $X^2$ in an $x,y$-plane. 

Choose an innermost pair of $x$ and $x^{-1}$, $y$ and $y^{-1}$, or $z$ and $z^{-1}$. For example, suppose an innermost pair is a pair of $x$ and $x^{-1}$. We may do a number of transpositions of $x$ so that it is adjacent to $x^{-1}$, and then eliminate the pair. The substring of $s_\omega$ between $x$ and $x^{-1}$ is monotonic in $y$ and $z$ (i.e. it contains only one of $y$ and $y^{-1}$ and only one of $z$ and $z^{-1}$), by our innermost pair assumption. The cycle associated to this substring contains any edge at most once. Therefore, the squares corresponding to the transpositions are all distinct. A similar argument eliminates an innermost pair of $y$ and $y^{-1}$ or $z$ and $z^{-1}$. So we produced a new locally embedded curve $\omega'$ with $|\omega'|=|\omega|-1$, and a new word $s_{\omega'}$. We also produced a cellular $2$-chain $C'$ such that $$\partial C'=\ch(\omega)-\ch(\omega')$$ and $\|C'\|_\infty\leq 1$. By induction hypothesis, $\ch(\omega')$ may be filled by a $2$-chain $C''$ with $$\|C''\|_\infty\leq |\omega'|.$$ Taking $C=C'+C''$, we see that $\partial C=\ch(\omega)$ and 
\begin{align*}\|C\|_\infty &\leq \|C'\|_\infty+\|C''\|_\infty \\&\leq|\omega'|+1\\&\leq |\omega|.
\end{align*} This completes the induction step.
\end{proof}

Let $\widetilde{X}$ be a new CW $2$-complex whose total space is $X^2$ and CW structure is a refinement of the CW structure on $X^2$ coming from the CW structure on $X$. Denote by $\widetilde{X}^1$ the $1$-skeleton of this CW complex.

\begin{lemma}\label{isoperimetric inequality on X^2} Let $\omega$ be an embedded closed curve on $\widetilde{X}^1$. Then there exists a cellular $2$-chain $C$ on $\widetilde{X}$ such that $\partial C=\ch(\omega)$ and $\|C\|_\infty\leq 5(|\omega\cap X^1|+1)$.
\end{lemma}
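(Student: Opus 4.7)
My plan is to cut $\omega$ into arcs at its $k := |\omega\cap X^1|$ crossings with $X^1$, fill each arc inside its 2-cell of $X$ by a signed-count 2-chain, and then apply \cref{isoperimetric inequality on X^1} to the residual 1-cycle, which I will show actually lives on $X^1$.

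Let $p_1,\ldots,p_k$ be the crossings in cyclic order along $\omega$, giving arcs $\omega_i\colon p_i\to p_{i+1}$ lying in 2-cells $F_i$ of $X$. (If $k=0$, $\omega$ lies in a single 2-cell of $X$, which is a disk, so $\ch(\omega)$ bounds a 2-chain of $L_\infty$-norm $\leq 1$, and the bound is immediate.) For each 2-cell $F$ of $X$ that $\omega$ enters, the components of $\omega|_F$ are disjoint embedded arcs in the disk $\overline F$ with endpoints on $\partial F$. Orient $F$ and pick a basepoint on $\partial F$; for each 2-cell $\sigma$ of $\widetilde X$ inside $F$, set $D_F(\sigma)$ equal to the signed count of arcs of $\omega|_F$ crossed by any path from $\sigma$ to the basepoint, with signs from the orientations. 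This is well defined since $F$ is simply connected, and $|D_F(\sigma)|\leq(\text{number of arcs in }F)$. Setting $D=\sum_F D_F$, one has $\|D\|_\infty\leq\max_F(\text{arcs in }F)\leq k$ and $\partial D=\ch(\omega)+\beta$, where $\beta$ is a 1-chain supported on the part of $\widetilde X^1$ lying on $X^1$.

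Since $\ch(\omega)$ is a cycle, $\beta$ is too, and the key observation is that $\beta$ is actually the $\widetilde X$-subdivision of an $X^1$-cycle. Indeed, at any crossing $p_i$ interior to an $X$-edge $e$, the only $\widetilde X^1$-edges at $p_i$ with nonzero $\beta$-coefficient are the two subdivisions of $e$ adjacent to $p_i$, because any other $\widetilde X^1$-edge at $p_i$ enters the interior of a 2-cell of $X$ and hence lies outside $\beta$'s support. The cycle condition at $p_i$ then forces the two coefficients equal, so the $\beta$-coefficient is constant along all subdivisions of each $X$-edge $e$; call this value $n_e$ and write $\alpha=\sum_e n_e\cdot e$ for the induced $X^1$-cycle whose subdivision is $\beta$. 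Bounding $|n_e|\leq\sum_{F\supset e}(\text{arcs in }F)$ and summing yields $\|\alpha\|_1\leq\sum_F 4\cdot(\text{arcs in }F)=4k$, using that each $F$ has four $X$-edges on $\partial F$ and $\sum_F(\text{arcs in }F)=k$. By \cref{isoperimetric inequality on X^1} there is a 2-chain $C_X$ on $X$ with $\partial C_X=\alpha$ and $\|C_X\|_\infty\leq 4k$; subdividing to $\widetilde X$ preserves the $L_\infty$-norm and gives $\widetilde C_X$ with $\partial\widetilde C_X=\beta$. Then $C=-D-\widetilde C_X$ satisfies $\partial C=\ch(\omega)$ and $\|C\|_\infty\leq k+4k=5k\leq 5(k+1)$.

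\textbf{Main obstacle:} The crux is showing that $\beta$ is supported on $X^1$ and is the $\widetilde X$-subdivision of an $X^1$-cycle; this reduces to a local cycle-balance at each crossing $p_i$ and depends essentially on the fact that no "interior" $\widetilde X^1$-edge at $p_i$ contributes to $\beta$. A secondary technical issue is choosing orientations consistently when assembling the $D_F$'s so that $\partial D$ globally equals $\ch(\omega)+\beta$ rather than a signed variant, but this is handled by orienting every 2-cell $F$ of $X$ compatibly with a fixed orientation of the ambient $\mathbb R^3$.
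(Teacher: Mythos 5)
Your proof is correct (up to a sign slip at the end: with $\partial D=\ch(\omega)+\beta$ and $\partial\widetilde C_X=\beta$, the filling should be $C=D-\widetilde C_X$, not $-D-\widetilde C_X$), and it follows essentially the same approach as the paper: cut $\omega$ at its $X^1$-crossings, fill locally within the squares of $X$, reduce to an $X^1$-cycle of $L_1$-norm at most $4k$, and apply Lemma~\ref{isoperimetric inequality on X^1}. Your local fill is a cosmetic variant of the paper's --- you build a per-face winding-number chain $D_F$ treating all arcs in a face at once (giving $\|D\|_\infty\le\max_F m_F$), whereas the paper fills the disk between each arc $\omega_i$ and a complementary arc $\delta_i\subset\partial Q_i$ of its containing square (giving $\|C'\|_\infty\le\ell$), and correspondingly the paper obtains the residual $X^1$-cycle explicitly as $\ch(\delta)$ while you obtain it by the local balance argument; both routes yield the same $5(k+1)$ bound.
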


\begin{proof} By assumption, $i:X^2\to \widetilde{X}^2$ is a cellular map. First, we to push $\omega$ into $X^1$. That is, we construct a cellular $1$-cycle $\alpha$ on $X$ and a $2$-chain $C'$ on $\widetilde{X}$ such that $$\|C'\|_\infty\leq |\omega\cap X^1|+1,$$ $$\partial C'=\ch(\omega)-i_*\alpha$$ and $$|\alpha|\leq 4(|\omega\cap X^1|+1).$$ 

To do this, since $\omega$ is an embedded curve on $\widetilde{X}^1$, $\omega$ is the concatenation of arcs $\omega_1,...,\omega_\ell$, each contained in a $2$-cell (square) of $X$, with boundary on $X^1$. (In the case where $\omega$ does not intersect $X^1$, we have only one arc which is a closed curve with no boundary, contained in a $2$-cell of $X$.) In either case, the number of arcs is bounded: $$\ell\leq |\omega\cap X^1|+1.$$ By \cref{ch concatenation}, $$\ch(\omega)=\sum_{i=1}^{\ell}{\ch(\omega_i)}.$$ For each $\omega_i$ contained in a $2$-cell $Q_i$ of $X$, there is an arc $\delta_i\subset \partial Q_i$ and an embedded disk on $Q_i$ with boundary $\omega_i\cup \delta_i$. This embedded disk is the union of the closure of some distinct $2$-cells of $\widetilde{X}$. Hence, there is a $2$-chain $C'_i$ such that $\partial C'_i=\ch(\omega_i)-\ch(\delta_i)$ and $\|C'_i\|_\infty\leq 1$. Let $\delta\subset X^1$ be the closed curve formed by concatenating the $\delta_i$s. By \cref{ch concatenation} and \cref{ch is natural}, \begin{align*}\sum_{i=1}^{\ell}\ch(\delta_i)&=\ch(i_*\delta)\\&=i_*\ch(\delta).
\end{align*} So $$\partial \left(\sum_{i=1}^{\ell} C'_i\right)=\ch(\omega)-i_*\ch(\delta).$$

The curve $\delta$ is the union of $\ell$ embedded arcs $\delta_i\subset \partial Q_i$. Given a $1$-cell $\sigma$ of $X$, $$|\pi_\sigma(\delta)|\leq |\{1\leq i\leq \ell|\partial Q_i \text { has a nonzero coefficient of }\sigma\}|.$$ Hence 
\begin{align*}|\ch(\delta)|&\leq 4\ell\\&\leq 4(|\omega\cap X^1|+1).
\end{align*}

Let $$C'=\sum_{i=1}^{\ell}C'_i.$$ Since $\|C'_i\|_\infty\leq 1$ for all $i$, 
\begin{align*}\|C'\|_\infty&\leq \ell\\ &\leq  |\omega\cap X^1|+1.
\end{align*} Taking $$\alpha=\ch(\delta),$$ we see that $|\alpha|\leq 4(|\omega\cap X^1|+1)$ and $$\partial C'=\ch(\omega)-i_*\alpha$$ as desired. 

By \cref{isoperimetric inequality on X^1}, there exists a cellular $2$-chain $C''$ on $X$ with $$\|C''\|_\infty\leq 4(|\omega\cap X^1|+1)$$ such that $\partial C''=\alpha$. Because $i:X^2\to \widetilde{X}^2$ is the inclusion map, $$\|i_*C''\|_\infty\leq  4(|\omega\cap X^1|+1)$$ also. Taking $C=i_*C''+C'$, we note that $C$ is a cellular $2$-chain on $\widetilde{X}$ satisfying \begin{align*}\|C\|_\infty&\leq \|i_*C''\|_\infty+\|C'\|_\infty\\&\leq 5(|\omega\cap X^1|+1)
\end{align*} and $\partial C=\ch(\omega)$.
\end{proof}

\section{Proof of \cref{main embedding theorem CW version}}\label{proof of CW intersection}

Let $T\subset X\simeq \mathbb{R}^3$ be an embedded torus, in general position with the CW structure of $X$. Let $u$ and $v$ be primitive homology classes in $H_1(T)$ that are trivial in the two components of $\mathbb{R}^3-T$, respectively. Suppose $$\textstyle\sys_u(T)\geq 20\cdot(|X^1\cap T|+1)\cdot\length(X^2\cap T).$$ Assume that no $3$-cell of $X$ contains a representative of $v$ on $T$. We will show a contradiction. 

We may assume that $u$ and $v$ are primitive homology classes in $H_1(T)$. First, we prove lemmas to show that $X^2\cap T$ contains a representative of some $au+bv$, with $a$ nonzero.

\begin{lemma} \label{1-complex on torus} Let $J$ be a $1$-complex on $T$. If $T- J$ does not contain any representative of $v$, then $J$ contains a representative of $$au+bv\in H_1(T)$$ with $a\neq 0$.
\end{lemma}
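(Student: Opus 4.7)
The plan is to prove the contrapositive: if every cycle in $J$ represents a class in $\mathbb{Z}v \subset H_1(T)$, then $T \setminus J$ contains a representative of $v$.

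Since $\pi_1(T) = H_1(T) = \mathbb{Z}^2$ is abelian, the hypothesis says the image of $\pi_1(J) \to \pi_1(T)$ lies in $\mathbb{Z}v$. This means the inclusion $J \hookrightarrow T$ lifts through the cyclic cover $p \colon \hat{T} \to T$ corresponding to the subgroup $\mathbb{Z}v$. Concretely $\hat{T} \cong \mathbb{R} \times S^1$, where the $S^1$ factor is the $v$-direction and the deck group is $\mathbb{Z}u$, acting by integer translation in the $\mathbb{R}$ factor. Let $\hat{J} \subset \hat{T}$ be a chosen lift. Because $J$ is a finite 1-complex and $\hat{J} \to J$ is a homeomorphism, $\hat{J}$ is compact, so it lies in some strip $[a,b] \times S^1$. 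The full preimage of $J$ in $\hat{T}$ is then
$$\mathcal{J} \;=\; \bigsqcup_{n \in \mathbb{Z}} (\hat{J} + nu),$$
a disjoint union of compact translates (disjointness follows from the fact that each translate is a distinct section of $p$ over $J$). Only finitely many translates meet any given compact set in $\hat{T}$, so $\mathcal{J}$ is locally finite, hence closed, and its complement is open.

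I will now produce a representative of $v$ in $\hat{T} \setminus \mathcal{J}$ and project. For a generic $x_0 \in \mathbb{R}$, the vertical circle $\gamma_0 = \{x_0\} \times S^1$ is a representative of $v$ in $\pi_1(\hat{T})$. Only finitely many lifts $\hat{J} + nu$ have $x_0$ in their bounded $x$-range (those with $n \in [x_0 - b, x_0 - a]$), and each of these lifts meets $\gamma_0$ transversely in finitely many points, so $\gamma_0 \cap \mathcal{J}$ is a finite set $\{p_1, \ldots, p_k\}$. Each $p_j$ lies on a single lift $\hat{J} + n_j u$ and, by disjointness and closedness, is at positive distance from every other lift of $\mathcal{J}$. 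Since $\mathcal{J}$ is 1-dimensional in the 2-manifold $\hat{T}$, a sufficiently small local perturbation of $\gamma_0$ in a neighborhood of $p_j$ may be chosen to bypass $\hat{J} + n_j u$ without straying close enough to meet any other lift. Performing such a perturbation at each $p_j$ yields a loop $\gamma'$ in $\hat{T} \setminus \mathcal{J}$ that is homotopic to $\gamma_0$ in $\hat{T}$, hence still represents $v \in \pi_1(\hat{T})$. Since the inclusion $\pi_1(\hat{T}) = \mathbb{Z}v \hookrightarrow \pi_1(T)$ sends $v$ to $v$, the projected loop $p(\gamma') \subset T \setminus J$ represents $v$, contradicting the hypothesis.

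The step I expect to require the most care is the local perturbation: one must verify that the perturbation near $p_j$ can avoid not only the local piece of $\hat{J} + n_j u$ but all of $\mathcal{J}$. This is precisely where the local finiteness and closedness of $\mathcal{J}$, combined with the 2-dimensionality of $\hat{T}$, are used. Everything else in the argument is a clean consequence of the covering space setup determined by the hypothesis $W \subseteq \mathbb{Z}v$.
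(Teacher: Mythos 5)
Your covering-space strategy --- pass to the infinite cyclic cover $p\colon \hat T\to T$ corresponding to $\mathbb Zv$, lift $J$ to a disjoint union $\mathcal J=\bigsqcup_n(\hat J+nu)$ of compact translates, and produce a loop in $\hat T\setminus\mathcal J$ generating $H_1(\hat T)\cong\mathbb Z$ --- is a legitimate alternative to the paper's approach (which works directly in $T$ with a regular neighborhood of $J$ and classifies the boundary curves). The setup through the disjointness and local finiteness of $\mathcal J$ is correct.

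However, the perturbation step contains a genuine gap. Let $x_0$ be generic, so that $\gamma_0=\{x_0\}\times S^1$ meets $\mathcal J$ transversely in finitely many interior points of edges, say $p_j\in\hat J+n_ju$. You assert that a ``sufficiently small local perturbation of $\gamma_0$ in a neighborhood of $p_j$'' removes the intersection while staying away from all other lifts. This is false: if $e$ is the open edge of $\mathcal J$ through $p_j$ and $\gamma_0$ crosses $e$ transversely, then inside any small disk $B$ centered at $p_j$, $e\cap B$ separates $B$ into two half-disks with the two endpoints of $\gamma_0\cap\partial B$ on opposite sides, so \emph{every} arc in $B$ joining those endpoints crosses $e$. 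To remove the intersection one must slide $\gamma_0$ past an endpoint of the edge or, more fundamentally, past a vertex of the graph $\hat J+n_ju$; this is a non-local move whose size is controlled by the geometry of $\hat J$, not by a distance parameter you get to shrink, and it can introduce new crossings with $\mathcal J$. Concretely, take $J\subset T=\mathbb R^2/\mathbb Z^2$ to be the embedded arc from $(0,0)$ to $(0.1,0.5)$ traced ``the long way'' in the $u$-direction, so that its lift $\hat J$ is the segment from $(0,0)$ to $(1.1,0.5)$ in $\hat T=\mathbb R\times S^1$. Here the $x$-projection of $\hat J$ is $[0,1.1]$, which has length greater than the period of the deck translation; consequently $\bigcup_n(\hat J+nu)$ projects \emph{onto} the $x$-axis and every vertical circle meets $\mathcal J$. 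A loop in $\hat T\setminus\mathcal J$ representing $v$ does exist (a ``slanted'' circle parallel to the segments works), but it is not close to any vertical circle, so no small perturbation of a vertical circle produces it. Fixing the argument requires a genuinely global statement --- for instance, taking a regular neighborhood $N_n$ of each translate $\hat J+nu$, noting each boundary circle of $N_n$ is either inessential in $\hat T$ or isotopic to the core circle, and arguing as in the paper --- which is essentially reintroducing the idea your proposal was trying to bypass.
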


\begin{proof} We will show the contrapositive. Suppose $J$ does not contain any representative of $au+bv$ with $a\neq 0$. Then, the image of the map $H_1(J)\to H_1(T)$ is contained in the subgroup of $H_1(T)$ generated by $v$. We divide into two cases.

\textbf{Case 1.} The image is trivial. Let $U$ be a sufficiently small neighborhood of $J$, so that $U$ deformation retracts onto $J$. Now, $\partial U$ is a disjoint union of simple closed curves on $T$, and all these curves must be inessential. Therefore, either $U$ contains representatives of $u$ and $v$, or $T-U$ contains representatives of $u$ and $v$. The first situation cannot happen because the image of $H_1(U)\to H_1(T)$ is also trivial. Therefore $T- U$ (and thus also $T- J$ contains a representative of $v$). Thus, the lemma is true in this case.

\textbf{Case 2.} The image is nontrivial. So for some $b\neq 0$, a representative of $bv$ is contained in $J$. In this situation, $J$ must contain a representative of $v$ also. Now, suppose the lemma statement is false in this case, meaning $T-J$ contains no representative of $v$. This means $T-J$ contains no representative of $bv$ either, for $b\neq 0$. Since the intersection number of $v$ and $au+bv$ is $a$ and $J$ contains a representative of $v$, $T-J$ does not contain any representative of $au+bv$ with $a\neq 0$. So, the image of $H_1(T- J)\to H_1(T)$ is trivial. Again, let $U$ be a sufficiently small neighborhood of $J$ so that $U$ deformation retracts onto $J$. Then $\partial U$ is a disjoint union of simple closed curves on $T$, not intersecting $J$. Since the image of $H_1(T-J)\to H_1(T)$ is trivial, all the curves in $\partial U$ are inessential. This means either $U$ (and thus $J$) contains representatives of $u$ and $v$, or $T- U$ (and thus $T- J$) contains representatives of $u$ and $v$. Both situations lead to contradictions.
\end{proof}

We next show that we may choose the representative in \cref{1-complex on torus} to be embedded. 

\begin{lemma} \label{1-complex on torus embedded version} Let $J$ be a $1$-complex on $T$. Suppose $J$ contains a representative of $$au+bv,$$ with $a\neq 0$. Then $J$ contains an embedded representative of $$a'u+b'v$$ with $a'\neq 0$. 
\end{lemma}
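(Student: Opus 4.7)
The strategy is to decompose the given, possibly non-embedded, representative of $au+bv$ into a sum of embedded closed curves on $J$, and then select one of the summands whose $u$-coefficient is nonzero. The $u$-coefficients must add up to $a\neq 0$, so this selection is possible.

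More concretely, let $\gamma$ be a closed curve on $J$ with $[\gamma]_T=au+bv$. Regard $J$ as a $1$-dimensional CW complex, so that $J^1=J$, and form the cellular chain $\ch(\gamma)$ on $J$; by \cref{hom of ch} it is a cycle. Apply \cref{curves associated to cycle} with $Y=J$ and $\alpha=\ch(\gamma)$ to produce embedded closed curves $\gamma_1,\dots,\gamma_k\colon S^1\to J$ satisfying
\[
\ch(\gamma)=\sum_{i=1}^{k}\ch(\gamma_i).
\]
By \cref{hom of ch}, the same identity holds for the singular homology classes in $H_1(J)$, namely $[\gamma]=\sum_i[\gamma_i]$. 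Pushing forward under the inclusion $J\hookrightarrow T$ yields $au+bv=\sum_i[\gamma_i]_T$ in $H_1(T)$.

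Since $u$ and $v$ are primitive classes that are trivial in opposite components of $\mathbb{R}^3-T$, their algebraic intersection number on $T$ is $\pm 1$ (a standard Alexander duality computation for an embedded torus in $\mathbb{R}^3$), so $\{u,v\}$ is a basis of $H_1(T)\cong\mathbb{Z}^2$. Write $[\gamma_i]_T=a_iu+b_iv$ uniquely. Then $\sum_i a_i=a\neq 0$, so at least one index $i_0$ satisfies $a_{i_0}\neq 0$. The curve $\gamma_{i_0}$ is an embedded closed curve on $J$ representing $a_{i_0}u+b_{i_0}v$, which is the desired embedded representative with $a'=a_{i_0}\neq 0$ and $b'=b_{i_0}$.

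The argument is essentially a direct application of the decomposition built in \cref{isoperimetric section}, so there is no serious technical obstacle; the only subtle point is the observation that $\{u,v\}$ forms a basis of $H_1(T)$, which ensures that the coefficient $a_i$ is well-defined and additive along the decomposition.
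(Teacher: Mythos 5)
Your proof is correct but takes a more formal route than the paper. The paper's proof is direct and informal: take the (possibly non-embedded) representative $\alpha$ of $au+bv$, split it at a self-intersection into two closed loops $\alpha_1,\alpha_2$ with $[\alpha]=[\alpha_1]+[\alpha_2]$, observe that at least one $\alpha_i$ carries a nonzero $u$-coefficient (which, as you note, uses that $\{u,v\}$ is a $\mathbb{Z}$-basis of $H_1(T)$), and iterate on that piece until it is embedded. You instead pass to the cellular chain $\ch(\gamma)$ and invoke \cref{curves associated to cycle} and \cref{hom of ch} from \cref{isoperimetric section}. This is essentially a rigorous reformulation of the same decomposition: your route makes termination automatic (since $\|\alpha\|_1$ strictly decreases in the induction of \cref{curves associated to cycle}) and makes the additivity of homology classes explicit (via \cref{hom of ch}), at the modest cost of having to fix a CW structure on $J$ in which $\gamma$ has a well-defined associated chain. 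In the application $J = T\cap X^2$ inherits such a structure from the refined complex $\widetilde{X}$, so this is not a gap; the paper's version is simply quicker and leaves the termination and well-definedness of the self-intersection splitting to the reader. Both proofs ultimately rest on the same two facts: a $1$-cycle on a graph decomposes into embedded loops, and $u$-coefficients are additive because $\{u,v\}$ is a basis.
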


\begin{proof} Let $\alpha$ be the representative of $au+bv$ in the hypothesis of the lemma. If $\alpha$ is not embedded, then $\alpha$ decomposes as $\alpha_1\cup \alpha_2$ for closed curves $\alpha_1$ and $\alpha_2$ on $T$. The homology classes of $\alpha_1$ and $\alpha_2$ may be expressed as linear combinations of $u$ and $v$. Since $\alpha$ contains a nonzero factor of $u$, at least one of $\alpha_1$ and $\alpha_2$ contains a nonzero factor of $u$. Now, we continue this process of decomposition until we obtain an embedded curve whose homology class contains a nonzero factor of $u$.
\end{proof}

We continue with the proof \cref{main embedding theorem CW version}. Assuming it is false, there is no $3$-cell of $X$ containing a representative of $v$ on $T$. Let $J$ be the $1$-complex $T\cap X^2$ on $T$. Our supposition implies that $T-J$ does not contain any representative of $v$. By \cref{1-complex on torus}, $J$ does contain a representative of $au+bv$, with $a\neq 0$. By \cref{1-complex on torus embedded version}, $J$ contains an embedded representative $\omega$ of $a'u+b'v$, with $a'\neq 0$.

The curve $\omega$ lies in $T\cap X^2$, by construction. Viewing $\omega$ as an embedded curve on $X^2$, we have $$|\omega\cap X^1|\leq |T\cap X^1|.$$ Put a CW structure $\widetilde{X}$ on $\mathbb{R}^3$ such that the inclusions $X\to \widetilde{X}$ and $T\to \widetilde{X}$ are cellular maps. Pull back the CW structure $\widetilde{X}$ to obtain CW structures $\widetilde{X^2}$ and $\widetilde{T}$ on $X^2$ and $T$, respectively. The CW structure $\widetilde{X^2}$ is a refinement of the CW structure on $X^2$ viewing it as the $2$-skeleton of $X$. The embedded curve $\omega$ lies in the $1$-skeleton of $\widetilde{X}$ (also the $1$-skeleton of $\widetilde{X^2}$). By \cref{isoperimetric inequality on X^2} applied to the $2$-complex $\widetilde{X^2}$, there is a $2$-chain $C$ on $\widetilde{X^2}$ such that $$\|C\|_\infty\leq 5(|T\cap X^1|+1)$$ and $$\partial C=\ch(\omega).$$ 

Let us label the closures of the two connected components of $\mathbb{R}^3-T$ as $T_u$ and $T_v$, so that the homology class $u$ is trivial in $T_u$, and $v$ is trivial in $T_v$. Note that $$H_1(T_u)\simeq H_1(T_v)\simeq \mathbb{Z},$$ generated by $v$ and $u$, respectively. Pulling back the CW structure on $\widetilde{X}$ gives CW structures $\widetilde{T_u}$ and $\widetilde{T_v}$ on $T_u$ and $T_v$, respectively. Each (open) $2$-cell of $\widetilde{X}$ is contained in one of the components $T_u$ or $T_v$ (or both, if it is in $T$). Since $C$ is a cellular chain on $\widetilde{X^2}$, each cell of $C$ is contained in one of the $T_u$ or $T_v$. Thus, $$C=C_{u}+C_{v},$$ where $C_u$ consists of the $2$-cells of $C$ lying in $T_u$, and $C_v$ consists of the $2$-cells of $C$ lying in $T_v$. 

Note that $\ch(\omega)$ is a cellular $1$-cycle on $\widetilde{T}$. Since $\omega$ represents the singular homology class $a'u+b'v$ in $H_1(T)$, by \cref{hom of ch}, $\ch(\omega)$ represents the cellular homology class $a'u+b'v$ in $H_1^{\cell}(\widetilde{T})$. Now, $\partial C_u$ is also a cellular $1$-chain on $\widetilde{T}$. It is a cycle on $\widetilde{T}$ since it is a cycle (actually a boundary) on $\widetilde{T_u}$. Furthermore, $\partial C_u$ represents the homology class $a'u$ in $H_1^{\cell}(\widetilde{T})$, where $a'\neq 0$. 

Because $\widetilde{X^2}$ is a refinement of the standard CW structure on $X^2$ viewing it as a cubical lattice, $$N(\widetilde{X^2})=4$$ (see \cref{def of N}). Since 
\begin{align*}\|C_{u}\|_\infty&\leq\|C\|_\infty\\ &\leq 5(|T\cap X^1|+1),
\end{align*} \cref{boundary of k embedded chain} implies that  $$\|\partial C_{u}\|_\infty\leq 20(|T\cap X^1|+1).$$ 

By \cref{curves associated to cycle}, there exists a closed curve $\gamma$ (possibly with multiple components) on the $1$-skeleton of $\widetilde{T}$ with multiplicity at most $20(|T\cap X^1|+1)$ such that $$\ch(\gamma)=\partial C_u.$$ Hence, $$\length (\gamma)\leq 20\cdot (|T\cap X^1|+1)\cdot \length(T\cap X^2).$$ By \cref{hom of ch}, $\gamma$ represents the singular homology class $a'u$. If $|a'|=1$, this contradicts the systole assumption. If $|a'|\neq 1$, there is a subset of $\gamma$ which is a curve representing $u$, which also contradicts the systole assumption, completing the proof of \cref{main embedding theorem CW version}.

\appendix

\section{Some extensions}

\subsection{A sharp example for \cref{main embedding theorem}} \label{Appendix A1} In this section, we describe an example due to Aleksandr Berdnikov, which shows that \cref{main embedding theorem} is sharp as $\sys_u(T)\to \infty$.

Let $\beta$ be a torus knot $T_{n,n-1}$ of length $\sim n$ lying on a standard torus $$\{(x,y,z)\in \mathbb{R}^3|(\sqrt{x^2+y^2}-2)^2+z^2=1\}$$ in $\mathbb{R}^3$. Let $U$ be the unit normal disk bundle of the torus restricted to $\beta$, so that $U$ is a knotted annulus in $\mathbb{R}^3$. Push $U$ infinitesimally along its normal bundle, keeping $\partial U$ fixed, to get an isotopic copy $U'$.  The interior of $U'$ does not intersect $U$, and  $\partial U'=\partial U$. Let $T=U\cup U'$, so that $T$ is a knotted torus embedded in $\mathbb{R}^3$. Let $\zeta_1$ and $\zeta_2$ be the two connected components of $\partial U$. They are isotopic curves on $T$.

Let the interior of $T$ (denoted $\interior(T)$) be the bounded connected component of $\mathbb{R}^3-T$. Let $v$ be the primitive homology class that is trivial in $\interior(T)$. Then $v$ is represented by a curve $\gamma$ lying in an infinitesimal neighborhood of a unit normal vector in $U$. In particular, $\diam_{\mathbb{R}^3}(\gamma)\sim 1$.

Let the exterior of $T$ (denoted $\exterior(T)$) be the unbounded connected component of $\mathbb{R}^3-T$. Let $u$ be the primitive homology class of $\mathbb{R}^3-T$ that is trivial in $\exterior(T)$. We construct $u$ explicitly as follows. Let $w$ be the homology class in $H_1(T)$ represented by $\zeta_1$ and $\zeta_2$. Note that $v$ and $w$ are linearly independent. Now, we express $u$ in terms of $v$ and $w$. Let $D\subset \mathbb{R}^3$ be an immersed disk with $\partial D=\zeta_1$. Isotope $D$ so that it intersects $T$ and $\zeta_2$ transversely. Then $D\cap \exterior(T)$ and $D\cap \interior(T)$ are immersed submanifolds in $\mathbb{R}^3$. The boundary of $D\cap \exterior(T)$ represents a homology class $a\in H_1(T)$, while the boundary of $D\cap \interior(T)$ represents a homology class $b\in H_1(T)$. By construction, $a+b=w$. Moreover, $b$ is a multiple of $v$ since it must be trivial in the homology of $\interior(T)$. The multiplicity of $v$ is the linking number of $\zeta_1$ and $\zeta_2$, which is $n(n-1)$. So $w=a+n(n-1)v$. By construction, $a$ must be a multiple of $u$. Furthermore, $a=w-n(n-1)v$ is primitive. Thus, $$u=a=w-n(n-1)v.$$ Now, $\sys_w(T)\sim \length(\beta)\sim n$, while $\sys_v(T)\sim 1$. Hence, $\sys_u(T)\sim n(n-1)$.

On the other hand, $\Area(T)\sim \length(\beta)\sim n$. Therefore, in this example, $$\textstyle\sys_{\mathbb{R}^3}(T)\sim \sys_u(T)^{-1/3}\Area(T)^{2/3}.$$

\subsection{Another extrinsic systolic inequality} \label{Appendix A2} In this section, we include a proof of the following result. 

\begin{theorem}\label{Pardon embedding theorem} Let $T\subset \mathbb{R}^3$ be an embedded torus. Let $u$ and $v$ be homology classes on $H_1(T)$ that are trivial in the homology of each component of $\mathbb{R}^3-T$, respectively. Then $$\textstyle\sys_{\mathbb{R}^3}(T)\lesssim \displaystyle\frac{\Area(T)}{\min\{\sys_u(T),\sys_v(T)\}}.$$
\end{theorem}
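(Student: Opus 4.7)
My plan follows the general cubical-lattice structure of the proof of \cref{main embedding theorem}, but with two crucial twists: a finer lattice and a two-sided use of both systole bounds.

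Normalize so $\Area(T)=1$ and let $S=\min\{\sys_u(T),\sys_v(T)\}$; the goal is $\sys_{\mathbb{R}^3}(T)\lesssim 1/S$. Cover $\mathbb{R}^3$ by a cubical lattice $X$ of side length $\ell=C/S$ for a large universal constant $C>9$. By \cref{translation}, translate $T$ so that $\length(T\cap X^2)\leq 9/\ell$ and $|T\cap X^1|\leq 9/\ell^2$. The key observation is that $\length(T\cap X^2)\leq 9/\ell=9S/C<S$; consequently, no embedded closed curve in $T\cap X^2$ has length reaching $\sys_u$ or $\sys_v$, so $T\cap X^2$ contains no representative of the primitive classes $u$ or $v$.

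Now suppose for contradiction that no $3$-cell of $X$ contains a non-contractible curve on $T$. Apply the argument of \cref{proof of CW intersection} twice---once with $v$ as the forbidden class, once with $u$---using \cref{1-complex on torus} and \cref{1-complex on torus embedded version}. This produces embedded closed curves $\omega_1,\omega_2\subset T\cap X^2$ of primitive classes $a_1u+b_1v$ (with $a_1\neq 0$) and $a_2u+b_2v$ (with $b_2\neq 0$); the observation above further forces $b_1\neq 0$ and $a_2\neq 0$, so both curves have ``diagonal'' classes with all four coefficients nonzero.

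The remaining step is to extract a contradiction from the existence of $\omega_1$ and $\omega_2$. The direct isoperimetric argument from \cref{proof of CW intersection} applied to $\omega_1$ alone (filling $\ch(\omega_1)$ by a $2$-chain on $\widetilde{X^2}$ via \cref{isoperimetric inequality on X^2}, splitting it between $T_u$ and $T_v$, and extracting a $u$-representative from $\partial C_u$) only yields a bound of order $|T\cap X^1|\cdot\length(T\cap X^2)\sim 1/\ell^3\sim S^3/C^3$, which fails to contradict $\sys_u\geq S$ when $S$ is large. The main obstacle is therefore to improve this filling so as to eliminate the factor $|T\cap X^1|$ (the ``first-order'' multiplicity contribution in the isoperimetric inequality). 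Both $\omega_1$ and $\omega_2$ must be used together---perhaps by constructing joint fillings of $\ch(\omega_1)$ and $\ch(\omega_2)$ whose multiplicities cancel along common crossings with $X^1$, or by a more geometric argument (in the spirit of Pardon) that directly exploits both systole bounds. Once this is accomplished, one obtains a representative of $u$ or $v$ of length strictly less than $S$, contradicting the systole hypotheses and completing the proof.
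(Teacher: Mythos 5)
Your proposal stops at exactly the point where a proof would be required: you set up the cubical-lattice framework, extract diagonal curves $\omega_1,\omega_2\subset T\cap X^2$, correctly diagnose that the isoperimetric filling from \cref{proof of CW intersection} yields a useless bound of order $|T\cap X^1|\cdot\length(T\cap X^2)\sim S^3$, and then offer only speculative suggestions (``joint fillings whose multiplicities cancel,'' ``a more geometric argument'') for how to proceed. This is a genuine gap, not a routine omission. The $|T\cap X^1|$ factor is intrinsic to the $L^\infty$-isoperimetric machinery: the $2$-chain filling $\ch(\omega)$ really can have multiplicity growing like $|T\cap X^1|\sim S^2$, and nothing in the lattice setup gives a way to cancel it. Nor does the existence of a \emph{second} diagonal curve $\omega_2$ obviously help: both $\omega_1$ and $\omega_2$ live in the short $1$-complex $T\cap X^2$, but their cellular chains are independent, and there is no identified mechanism for their fillings to interfere constructively. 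As written, the proposal is an unfinished sketch whose core difficulty remains open.

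For comparison, the paper's proof of \cref{Pardon embedding theorem} abandons the cubical lattice entirely and follows Pardon's original method from \cite{Par11}. It considers a smallest-scale box $Q$ (with fixed aspect ratio $1:2^{1/3}:2^{2/3}$) containing representatives of both $u$ and $v$, uses an averaging/co-area argument to find a nearby box $Q'$ and a separating plane $P_s$ both meeting $T$ in total length $\lesssim r_0^{-1}$, and then surgers $\partial Q'$ along $P_s$ via a $1$-parameter family of spheres $S_t$. Because $\length(S_t\cap T)\lesssim r_0^{-1}<\min\{\sys_u,\sys_v\}/20$, each $S_t$ meets $T$ only in inessential curves, and since $\genus(Q'\cap T)=1$ while $\genus(Q'_1\cap T)=\genus(Q'_2\cap T)=0$ by minimality of $Q$, Pardon's Lemma 2.3 gives the contradiction. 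The crucial ingredient your approach lacks is precisely this genus-monotonicity-along-sweepouts result, which exploits the two-sided systole hypothesis geometrically rather than homologically and sidesteps any multiplicity count on $X^1$ altogether. If you want to complete a proof, switching to that surgery framework is a far more promising route than trying to repair the isoperimetric bound.
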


The proof is very similar to the proof of the main theorem in \cite{Par11}. The topological part of the argument is the same, but the geometric quantities measured are different.

\begin{proof}[Proof of \cref{Pardon embedding theorem}]
Perturbing, we may assume $T$ is smooth. Scaling, we may assume that $\Area(T)=1$. Denote by box of scale $r$ a box with dimensions $r$, $2^{1/3}r$ and $2^{2/3}r$. We would like to consider the box of the smallest scale containing representatives of both $u$ and $v$ on $T$. To avoid transversality and other technical issues, we simply let $r_0>0$ be a value such that there exists a box $Q$ of scale $r_0$ containing representatives of both $u$ and $v$ on $T$, but no box of scale $(2/3)r_0$ contains representatives of both $u$ and $v$ on $T$. We also assume that $Q$ intersects $T$ transversely. By construction, $$r_0\geq \textstyle \sys_{\mathbb{R}^3} (T).$$ Assume that $$r_0>20\textstyle(\min\{\sys_u(T),\sys_v(T)\})^{-1}.$$ We will prove a contradiction.

Translate $Q$ and $T$ so that $Q$ is centered at the origin $(0,0,0)$. For $r\in [r_0,6r_0/5]$, let $Q_r$ be the box of scale $r$ centered at the origin. By our area normalization, $$\int_{r_0}^{6r_0/5}\length(\partial Q_{r}\cap T)dr\leq 1.$$ So for some $Q'$ among the $Q_r$s, $$\length(\partial Q'\cap T)\leq 5r_0^{-1}.$$ Again, we may assume that $Q'$ intersects $T$ transversely.

Next, we find a plane intersecting the long dimension of $Q'$ whose intersection with $T$ is bounded. Without loss of generality, assume $z$ is the long dimension of $Q'$. Let $P_s$ be the intersection of the plane $z=s$ with the $Q'$. Again by our area normalization,
$$ \int_{-r_0/10}^{r_0/10}\length(P_s\cap T)ds\leq 1.$$ Therefore, for some $s\in [-r_0/10,r_0/10]$, $$|P_s\cap \beta|\leq 5r_0^{-1}.$$ As always, we choose $P_s$ so that it intersects $T$ transversely.

The plane $P_s$ divides $Q'$ into two boxes, which we label $Q'_1$ and $Q'_2$. Apriori, $Q'_1$ and $Q'_2$ may not be boxes of any scale, as they may have the wrong ratio of dimensions. However, both $Q'_1$ and $Q'_2$ are contained in boxes of scale $(2/3)r_0$. Since $Q'$ contains $Q$, it contains representatives of both $u$ and $v$ in $H_1(T)$. Thus, $$\genus(Q'\cap T)=1.$$ Since $Q$ was assumed to be the smallest box containing representatives of $u$ and $v$, $$\genus(Q'_1\cap T)=\genus(Q'_2\cap T)=0.$$

Let $S_t$ be the $1$-parameter family of spheres, starting from $S_0=Q'$ and doing surgery along $P_s$, so that $S_1$ is a disjoint union of spheres. Since $$\length((Q'\cup P_s)\cap T)\leq 10r_0^{-1},$$ we may assume that $$\length(S_t\cap T)\leq 20r_0^{-1}$$ for all $0\leq t\leq 1$. Since $$r_0^{-1}< \frac{\textstyle\min\{\sys_u(T),\sys_v(T)\}}{20},$$ $S_t$ only intersects $T$ in inessential curves. (If it intersected $T$ in an essential curve, we could find an embedded representative of $u$ or $v$, which would give a contradiction to the length bound on $S_t\cap T$.) So we have a $1$-parameter family of spheres $S_t$, each intersecting $T$ only in inessential curves, such that $$\genus(S_0\cap T)=\genus(Q'\cap T)=1$$ and $$\genus(S_1\cap T)=\genus(Q'_1\cap T)+\genus(Q'_2\cap T)=0.$$ This is a contradiction by \cite[Lemma 2.3]{Par11}. 
\end{proof}

\subsection{Twisted embeddings of flat tori} \label{Appendix A3} In this section, we describe the following question of Guth, which is a generalization of \cref{twisted cylinder}. 

Let $\mathbb{R}^2/\mathbb{Z}^2$ be the flat torus equipped with the Euclidean metric. Let $$f:\mathbb{R}^2/\mathbb{Z}^2\to T\subset \mathbb{R}^3$$ be an unknotted $C^1$-isometric embedding. There is an induced map on homology given by $$f_*:H_1(\mathbb{R}^2/\mathbb{Z}^2)\to H_1(T).$$ There is a canonical identification $H_1(\mathbb{R}^2/\mathbb{Z}^2)\simeq \mathbb{Z}^2$, as follows. A point in $\mathbb{Z}^2\subset \mathbb{R}^2$ determines a line to the origin, which in turn gives a closed curve and homology class on the torus. Let $u$ and $v$ in $H_1(T)$ be homology classes trivial in the homology of the two components of $\mathbb{R}^3-T$, respectively. Then $u$ and $v$ also give an identification $H_1(T)\simeq \mathbb{Z}^2$. Under these identifications, $f_*\in SL_2(\mathbb{Z})$ is a matrix that encodes the topology of the embedding of the flat torus.

\begin{question} What is the best upper bound for $\sys_{\mathbb{R}^3}(T)$ in terms of the matrix $f_*$?
\end{question}

To study this question, it helps to consider the matrix entries $$f_*=\begin{pmatrix}
a & b\\
c& d
\end{pmatrix}\in SL_2(\mathbb{Z}).$$ This means, \begin{equation*}
\begin{cases} f_*^{-1}u=(d,-b)\\
f_*^{-1}v=(-c,a).
\end{cases}
\end{equation*}
Hence, $\sys_u(T)=(b^2+d^2)^{1/2}$ and $\sys_v(T)=(a^2+c^2)^{1/2}$. \cref{main embedding theorem} implies $$\textstyle\sys_{\mathbb{R}^3}(T)\lesssim \min\{(b^2+d^2)^{-1/6},(a^2+c^2)^{-1/6}\}$$ while \cref{Pardon embedding theorem} implies $$\textstyle\sys_{\mathbb{R}^3}(T)\lesssim \max\{(b^2+d^2)^{-1/2},(a^2+c^2)^{-1/2}\}.$$ If $b^2+d^2\lesssim (a^2+c^2)^{1/3}$ (or $a^2+c^2\lesssim (b^2+d^2)^{1/3}$), then \cref{main embedding theorem} gives a better bound. Otherwise, \cref{Pardon embedding theorem} gives a better bound.   

Examples can be constructed using the Nash isometric embedding theorem. There is a $1$-Lipschitz map from $\mathbb{R}^2/\mathbb{Z}^2$ to the standard torus in $\mathbb{R}^3$ scaled by $\min\{(a^2+b^2)^{-1/2},(c^2+d^2)^{-1/2}\}$, whose topology is given by $f_*$. By the Nash isometric embedding theorem, we may perturb the embedding locally to make it a $C^1$-isometry to a torus $T\subset \mathbb{R}^3$, so that $$\textstyle\sys_{\mathbb{R}^3}(T)\sim \min\{(a^2+b^2)^{-1/2},(c^2+d^2)^{-1/2}\}.$$ When $b^2+d^2\sim a^2+c^2$, the upper bound given by \cref{Pardon embedding theorem} matches the lower bound above asymptotically. For other ranges of $a$, $b$, $c$ and $d$, there is a gap between the upper and lower bounds. 

\bibliographystyle{alpha} 

\bibliography{bibliography}

\end{document}